\newcolumntype{P}[1]{>{\centering\arraybackslash}p{#1}}
\newtheorem{theorem}{Theorem}[section]
\newtheorem{proposition}[theorem]{Proposition}
\newtheorem{lemma}[theorem]{Lemma}
\newtheorem*{definition}{Definition}
\newtheorem*{remark}{Remark}
\numberwithin{equation}{section}
\newcommand{\bburl}[1]{\textcolor{blue}{\url{#1}}}
\newcommand{\hr}[1]{\href{#1}{\url{#1}}}
\newcommand{\nocontentsline}[3]{}
\newcommand{\zff}{\mathbf{F}_2[x]}
\newcommand{\zffmi}[1]{(\zff/(#1))^{\times}}
\newcommand{\zffmodinv}{\zffmi{r}}
\newcommand{\zffmodinvs}{\zffmi{s}}
\newcommand{\zffmodinvt}{\zffmi{t}}
\newcommand{\betatrunc}{\beta^{\trunc}}
\newcommand\eps{\varepsilon}
\newcommand\ord{\operatorname{ord}}
\newcommand\denom{\operatorname{denom}}
\newcommand\trunc{\operatorname{trunc}}
\newcommand\pprime{\operatorname{prime}}
\newcommand\modd{\operatorname{mod}}
\title{Sárközy's theorem in $\mathbf{F}_2[x]$}
\author{Aleksandra Kowalska}
\thanks{Mathematical Institute, University of Oxford}
\begin{document}

\begin{abstract}
In \cite{original}, B. Green showed that, conditional on GRH, a subset $A \subseteq [N]$ with $|A| \gg_{\eps} N^{\frac{11}{12}+\eps}$ must contain two elements whose difference is $p-1$ for $p$ a prime. We prove an analogous unconditional result for $\mathbf{F}_2[x]$, improving the exponent to $\frac{7}{8}+\eps$.
\end{abstract}

\maketitle

\section{Introduction}
\label{section_introduction}

In this note, we prove the following:

\begin{theorem}
\label{original_1_1}
    Let $A \subset \{f \in \mathbf{F}_2[x]: \deg(f) < N\}$ be such that $A-A$ contains no polynomial $r-1$, for $r$ irreducible. Then $|A| \ll_{\eps} 2^{(\frac{7}{8}+\eps)N}$.
\end{theorem}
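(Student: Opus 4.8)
The plan is to run the Hardy--Littlewood circle method over $\mathbf{F}_2[x]$. The structural gain over the integer problem is that the substitute for GRH needed on the major arcs --- the Weil bound for the $L$-functions of Dirichlet characters of $\mathbf{F}_2[x]$ --- is a theorem, so the argument is unconditional. Write $G_N=\{f\in\mathbf{F}_2[x]:\deg f<N\}$, let $\mathbb{T}=\mathbf{F}_2((1/x))/\mathbf{F}_2[x]$ carry its Haar probability measure, and set $\ee(\theta)=(-1)^{c}$ where $c$ is the coefficient of $x^{-1}$ in $\theta$; then $\theta\mapsto(f\mapsto\ee(f\theta))$ realises $\mathbb{T}$ as a space of characters of $G_N$, with $\int_{\mathbb{T}}\ee(g\theta)\,d\theta=\mathbf{1}_{g=0}$ and hence (Parseval) $\int_{\mathbb{T}}|\widehat{1_A}(\theta)|^2\,d\theta=|A|$, where $\widehat{1_A}(\theta)=\sum_{a\in A}\ee(a\theta)$. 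With $\widehat{\Lambda}(\theta)=\sum_{f\in G_N}\Lambda(f)\ee(f\theta)$ ($\Lambda$ the von Mangoldt function of $\mathbf{F}_2[x]$), the quantity to control is
\[
 S:=\sum_{a_1,a_2\in A}\Lambda(a_1-a_2+1)=\int_{\mathbb{T}}\bigl|\widehat{1_A}(\theta)\bigr|^2\,\widehat{\Lambda}(-\theta)\,\ee(\theta)\,d\theta .
\]
Any $f=a_1-a_2+1$ that is irreducible has $\deg f<N$ and exhibits $r-1\in A-A$, so under the hypothesis the irreducible part of $S$ vanishes; only proper prime powers survive, and since there are $\ll 2^{N/2}$ of them of degree $<N$ we get $S\ll N\,2^{N/2}\,|A|$. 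It therefore suffices to prove that once $|A|$ is a little above $2^{7N/8}$ one has, on the contrary, $S\gg|A|^2$.

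\emph{Major arcs.} Split $\mathbb{T}=\mathfrak{M}\sqcup\mathfrak{m}$, where $\mathfrak{M}$ is the union of narrow arcs around the fractions $a/q$ with $q$ monic, $\gcd(a,q)=1$, $\deg a<\deg q\le M$, and $M$ is a parameter to be chosen. On $\mathfrak{M}$ the prime polynomial theorem in arithmetic progressions over $\mathbf{F}_2[x]$, in the form $\psi(n;q,b)=\tfrac{2^n}{\phi(q)}+O\bigl((\deg q)\,2^{n/2}\bigr)$ --- a genuine square-root saving coming straight from the Weil bound --- lets one replace $\widehat{\Lambda}(-\theta)$ by the model sum $\sum_{q,a}\tfrac{\mu(q)}{\phi(q)}\,\widehat{1_{G_N}}(\theta-a/q)$ up to an error $\ll 2^{O(M)}\,2^{N/2}$ uniformly on $\mathfrak{M}$; the factor $2^{O(M)}$ arises from summing the $(\deg q)$-dependent Weil error over the residues modulo $q$, and it is what will cap how large $M$ may be taken. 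Substituting into $S$ and using $\int_{\mathbb{T}}|\widehat{1_A}|^2=|A|$ to absorb this, the major-arc part of $S$ equals $\mathfrak{S}(A)\,|A|^2+O\bigl(|A|\,2^{O(M)}2^{N/2}\bigr)$, where $\mathfrak{S}(A)$ is a singular-series-type local factor recording the distribution of $A$ modulo small moduli.

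\emph{Minor arcs and the dichotomy.} On $\mathfrak{m}$ I would expand $\widehat{\Lambda}$ by Vaughan's identity in $\mathbf{F}_2[x]$. The Type I sums vanish on $\mathfrak{m}$: the inner sum $\sum_{k\in G_\nu}\ee(mk\theta)$ is $2^\nu$ or $0$, and it is $0$ unless $m\theta$ is abnormally close to $\mathbf{F}_2[x]$, which the minor-arc condition forbids. The Type II bilinear sums $\sum_m\sum_k c_md_k\,\ee(mk\theta)$ are treated by Cauchy--Schwarz together with a count of $\{j:\|j\theta\|\text{ small}\}$ controlled by the degree of the denominator of the rational approximation to $\theta$; this gives a bound for $\sup_{\theta\in\mathfrak{m}}|\widehat{\Lambda}(\theta)|$ that decreases as $M$ increases (minor arcs with very small denominators being the enemy). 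The major-arc error and the minor-arc bound thus pull $M$ in opposite directions, and optimising $M$ against them and against the main term $\mathfrak{S}(A)|A|^2$ is exactly what fixes the threshold: if $|A|\gg 2^{(7/8+\eps)N}$ and $\mathfrak{S}(A)\gg 1$, then both error contributions to $S$ are $\ll|A|^2$, so $S\ge\tfrac12\mathfrak{S}(A)|A|^2>0$, contradicting the hypothesis. When instead $\mathfrak{S}(A)$ is small, $|\widehat{1_A}(a/q)|$ must be abnormally large for some low-degree $q$, so $A$ correlates with a residue class modulo a related small modulus $q_0$; passing to that class yields a set of strictly larger relative density inside an affine copy of $G_{N-\deg q_0}$, at the cost of replacing the problem by ``Sárközy for irreducibles in a fixed class modulo $q_0$'' --- a family closed under the same move --- so one iterates this density increment, which must stop before the density reaches $1$, and returns to the generic case.

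\emph{Main obstacle.} The crux is the minor-arc estimate and its interaction with the Weil-type major-arc error. Because the characters of $G_N$ see only the top few coefficients of their argument, the bilinear sums with $\deg(mk)$ close to $N$ --- precisely the ranges carrying the bulk of $\sum_{f\in G_N}\Lambda(f)$ --- must be estimated in a regime where the stretch of coefficients visible to the character is short; getting a strong enough bound there, with the correct dependence on the degree of the approximating denominator, and balancing it against the major-arc error (whose size is what prevents $M$ from being taken too large), is where the real work lies and is what produces the exponent $7/8$ rather than something smaller. A secondary, bookkeeping-level obstacle is verifying that the density-increment fallback is compatible with the shift by $1$ and with the varying progression constraint, and cannot run long enough to degrade the exponent.
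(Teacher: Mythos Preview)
Your approach is genuinely different from the paper's, and it has a real gap at the point where you claim the exponent $\tfrac{7}{8}$.

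The paper does \emph{not} run a major/minor arc decomposition on $\sum_{a_1,a_2\in A}\Lambda(a_1-a_2+1)$ with a density-increment fallback. Instead it uses Ruzsa's van der Corput method: one constructs a function $\Psi(f)=\Lambda'(f+1)H_Q(f)\,1_{f\in G_N}$, where $H_Q$ is a truncated sieve approximation to $\tau^2$, and shows that $\sum_f\Psi(f)e(f\theta)\ge -2^{N-K}$ for all $\theta$ while $\sum_f\Psi(f)\gg 2^N$. From this one builds a non-negative trigonometric polynomial $T(\theta)=a_0+\sum_{r\text{ prime}}a_{r-1}e((r-1)\theta)$ with $T(0)=1$ and $a_0\ll 2^{-K}$, and a two-line Fourier argument gives $|A|\le 2^N a_0$. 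The exponent $\tfrac{7}{8}$ arises from the parameter choice $K=(\tfrac18-\eps)N$, $R=\tfrac N4$, $Q=\tfrac N8$, and ultimately from the fact that in characteristic $2$ the weight $\Psi$ needs only \emph{two} factors (over $\mathbf{Z}$ one needs three, which is why Green's conditional result has $\tfrac{11}{12}$). There is no iteration and no density increment anywhere.

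Your scheme, by contrast, cannot reach a power-saving exponent. The density-increment step is the problem: when $\mathfrak S(A)$ is small you pass to a subprogression and gain a bounded factor in relative density, but you can only iterate $O(\log N)$ or so times before the ambient set is exhausted, and the losses accrued (both from the increment itself and from the changing arithmetic constraint ``irreducible in a fixed class mod $q_0$'') leave you with at best $|A|\ll 2^N/N^{c}$ or $|A|\ll 2^{N-c\sqrt N}$, not $|A|\ll 2^{(7/8+\eps)N}$. You assert that ``optimising $M$'' against the Weil-type major-arc error and the Vaughan minor-arc bound ``is exactly what fixes the threshold'' at $\tfrac78$, but you never carry out this optimisation, and in fact no balance of those two errors in a plain circle-method argument produces $\tfrac78$: the major-arc error $|A|\cdot 2^{O(M)}2^{N/2}$ and a Vinogradov-type minor-arc bound $2^N/2^{cM}$ (times $|A|$ via Parseval) would, even ignoring the density-increment losses entirely, equilibrate at a threshold depending on the unspecified constants in $O(M)$ and $c$, with no mechanism singling out $\tfrac78$. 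The number $\tfrac78$ is not a circle-method balance point; it is the size of the constant Fourier coefficient of the two-factor sieve weight $\Lambda'(f+1)H_Q(f)$, and your outline contains no object playing that role.
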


The general outline of the argument and proofs of most of the propositions are analogous to the integer case shown in \cite{original}. The main differences between the proof presented here and that for the integers are as follows:
\begin{itemize}
    \item The functions $\Psi, \Psi'$ (defined in (\ref{psi_def}) and (\ref{psi_prime_def})) can be defined in a field of characteristic $2$ with two terms instead of three, which allows us to decrease the exponent from $\frac{11}{12}$ to $\frac{7}{8}$.
    \item There is no need for a smoothing function, which allows us to simplify many arguments.
    \item The proof of Proposition \ref{section_3_1} is unconditional (as GRH has been proved for function fields).
    \item In Proposition \ref{original_3_3} we are able to calculate the exact value of the expression of interest and prove it in a much simpler way.
    \item In the proof of Proposition \ref{original_3_4} we are only dealing with two Fourier series, which allows us to simplify and shorten the argument.
\end{itemize}

\begin{remark}
\normalfont
    A very similar argument to the one presented here can show that, for $q$ a prime power, if $A \subset \{f \in \mathbf{F}_q[x]: \deg(f) < N\}$ such that $A-A+1$ does not contain any monic irreducible polynomial, we must have $|A| \ll_{\eps} q^{(\frac{11}{12}+\eps)N}$ (the same exponent as in \cite{original}). Since the savings in the exponent possible for $\mathbf{F}_2[x]$ were not available in the general case, the paper is focused on the binary function field. Just before we submitted this preprint to the arXiv, we received a preprint of Lott and Fan which, independently, gives the details of exactly the aforementioned result for arbitrary $q$.
\end{remark}

\textbf{Acknowledgments.} I would like to thank my supervisor Ben Green for suggesting the project, for helpful discussions on it, and for his support. 

\section{Fourier analysis in function fields - background}
\label{section_fourier}

In this section, we provide an introduction to Fourier analysis in function fields, which is used throughout the proof.
It is mostly based on \cite{cubics} and \cite{waring_function_fields}. Analogous results hold for any function field $\mathbf{F}_q[x]$, where $q=p^k$ is a prime power (here they are discussed for $q=2$ as this is the form in which we will use them later).

$\mathbf{F}_2[x]$ behaves in many ways similarly to the integers. Its fraction field $\mathbf{K} := \mathbf{F}_2(x)$ is a counterpart of the rational numbers, and the counterpart of the real numbers is
\begin{equation}
\label{k_infty}
    \mathbf{K}_{\infty}=\mathbf{F}_2\Big(\Big(\frac{1}{x}\Big)\Big)=\left\{ \sum_{i=-\infty}^{n} a_ix^i : n \in \mathbf{Z}, a_i \in \mathbf{F}_2 \right\}.
\end{equation}
We note that $\mathbf{K}_{\infty}$ is a field.

For $a \in \mathbf{K}_{\infty},$ $a=\sum_{i=-\infty}^{+\infty} a_ix^i$ let us define $\ord(a)$ to be the largest $n$ such that $a_i \neq 0$.

Now we can define the analogue of the unit circle:
\begin{equation}
\label{t_circle}
    \mathbf{T}:=\mathbf{K}_{\infty}\big/\mathbf{F}_2[x]=\left\{ \alpha \in \mathbf{K}_{\infty} : \ord(\alpha) \leqslant -1 \right\}.
\end{equation}
For $\theta \in \mathbf{K}_{\infty}$, let $\Vert \theta \Vert$ denote the 'fractional' part of $\theta$, that is, $\theta-||\theta|| \in \mathbf{F}_2[x]$ and $\ord(||\theta||) < 0$.

We define a function $e: \mathbf{K}_{\infty} \rightarrow \mathbf{C}$ which has properties analogous to exponentiation in function fields.

\begin{definition}
    For $\alpha=\sum_{i=-\infty}^{n} a_ix^i$ we define $$e(\alpha) := (-1)^{a_{-1}}.$$
\end{definition}

We now define Fourier analysis on $\mathbf{F}_2[x]/(h)$ for $(h)$ an ideal in $\mathbf{F}_2[x]$ generated by a polynomial $h$. To do so, we first show a basic property of $e$.

\begin{lemma}
\label{indicator_mod_r}
    For $h \in \mathbf{F}_2[x]$ and $f \in \mathbf{F}_2[x] / (h)$, we have $$\sum_{g \in \mathbf{F}_2[x] / (h)} e(fgh^{-1})=\begin{cases}
        2^{\deg(h)} & \text{if $f=0$} \\
        0 & \text{otherwise}
    \end{cases}.$$
\end{lemma}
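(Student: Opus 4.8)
This is precisely the orthogonality relation for additive characters of the finite abelian group $G := \mathbf{F}_2[x]/(h)$, so that is how I would prove it. I may assume $h \neq 0$, and the case $\deg h = 0$ is trivial (then $G$ is the zero group and the sum is $e(0) = 1 = 2^0$), so assume $d := \deg h \geq 1$.

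The first step is to check that $\psi_f \colon g \mapsto e(fgh^{-1})$ is a well-defined group homomorphism $(G,+) \to \mathbf{C}^\times$, in fact well defined in $f \in G$ too. From the definition of $e$, the coefficient of $x^{-1}$ in $\alpha + \beta$ is the $\mathbf{F}_2$-sum of the corresponding coefficients of $\alpha$ and $\beta$, so $e(\alpha+\beta) = e(\alpha)e(\beta)$ for all $\alpha, \beta \in \mathbf{K}_\infty$; moreover $e(p) = 1$ for every $p \in \mathbf{F}_2[x]$, since a polynomial has no $x^{-1}$-term. Consequently, replacing $g$ by $g + qh$ or $f$ by $f + q'h$ alters $fgh^{-1}$ by an element of $\mathbf{F}_2[x]$ and hence does not change $e(fgh^{-1})$, which gives well-definedness; and $\psi_f(g_1 + g_2) = e(fg_1h^{-1} + fg_2 h^{-1}) = \psi_f(g_1)\psi_f(g_2)$.

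If $f \equiv 0$ in $G$, every summand equals $e(0) = 1$ and there are $|G| = 2^d$ of them, which is the first case. If $f \not\equiv 0$, I would use the standard averaging argument: it is enough to find one $g_0 \in \mathbf{F}_2[x]$ with $\psi_f(g_0) = -1$, because then reindexing the sum via the bijection $g \mapsto g + g_0$ of $G$ yields $\psi_f(g_0) \sum_{g} \psi_f(g) = \sum_g \psi_f(g)$, whence $(\psi_f(g_0) - 1)\sum_g \psi_f(g) = 0$ and the sum is $0$.

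So the crux — and the only step requiring a small computation — is the construction of $g_0$. Using well-definedness in $f$, I take $f$ to be the representative of degree $< d$; since $f \not\equiv 0$ we have $0 \leq \deg f \leq d - 1$. Set $g_0 := x^{\,d - 1 - \deg f}$, so that $fg_0$ is a nonzero polynomial of degree exactly $d - 1$, hence monic (every nonzero polynomial over $\mathbf{F}_2$ is monic), while $h$ is monic of degree $d$. Carrying out the division in $\mathbf{K}_\infty$ then gives $fg_0 h^{-1} = x^{-1}(1 + c_1 x^{-1} + c_2 x^{-2} + \cdots)$ with $c_i \in \mathbf{F}_2$; in particular the coefficient of $x^{-1}$ is $1$, so $e(fg_0 h^{-1}) = -1$, as needed. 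I do not expect any serious obstacle here: the whole argument is the familiar character-orthogonality proof, the only points needing care being the well-definedness on $\mathbf{F}_2[x]/(h)$ and the elementary fact that dividing a monic polynomial of degree $d-1$ by a monic polynomial of degree $d$ in $\mathbf{K}_\infty$ produces a series beginning with $x^{-1}$.
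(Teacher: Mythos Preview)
Your argument is correct. The paper does not actually prove this lemma itself; it simply cites \cite[Chapter 5.1]{cubics} for the general $\mathbf{F}_q[x]$ version. Your proof is the standard orthogonality-of-characters argument and is almost certainly what that reference contains, so there is no real difference in approach to discuss. The one point worth highlighting is your explicit construction of $g_0 = x^{d-1-\deg f}$, which cleanly exploits the fact that over $\mathbf{F}_2$ every nonzero polynomial is monic; this makes the verification that $\psi_f$ is nontrivial completely elementary.
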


The proof of this lemma (in a general version for $\mathbf{F}_q[x]$) can be found in \cite[Chapter 5.1]{cubics}.

Let us now define the Fourier transform on $\mathbf{F}_2[x]/(h)$.
For $F: \mathbf{F}_2[x]/(h) \rightarrow \mathbf{C}$, let $\widehat{F}: \mathbf{F}_2[x]/(h) \rightarrow \mathbf{C}$ be defined as follows:
\begin{equation}
    \widehat{F}(g) = \frac{1}{2^{\deg (h)}} \sum_{f \in \mathbf{F}_2[x]/(h)}F(f)e(-fgh^{-1}).
\end{equation}

\begin{lemma}
    We have the following inversion formula: $$F(f)=\sum_{g \in \mathbf{F}_2[x]/(h)} \widehat{F}(g)e(fgh^{-1}).$$
\end{lemma}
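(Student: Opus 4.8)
The plan is to substitute the definition of the Fourier transform into the right-hand side and collapse the resulting double sum using the orthogonality relation of Lemma \ref{indicator_mod_r}. Concretely, I would start from
\[
\sum_{g \in \mathbf{F}_2[x]/(h)} \widehat{F}(g)\,e(fgh^{-1})
= \sum_{g \in \mathbf{F}_2[x]/(h)} \left( \frac{1}{2^{\deg(h)}} \sum_{f' \in \mathbf{F}_2[x]/(h)} F(f')\,e(-f'gh^{-1}) \right) e(fgh^{-1}),
\]
and interchange the two (finite) sums to obtain
\[
\frac{1}{2^{\deg(h)}} \sum_{f' \in \mathbf{F}_2[x]/(h)} F(f') \sum_{g \in \mathbf{F}_2[x]/(h)} e(-f'gh^{-1})\,e(fgh^{-1}).
\]

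The key input is that $e$ is a homomorphism from $(\mathbf{K}_{\infty},+)$ to $(\mathbf{C}^{\times},\cdot)$: if $\alpha = \sum a_i x^i$ and $\beta = \sum b_i x^i$, then the $x^{-1}$-coefficient of $\alpha+\beta$ is $a_{-1}+b_{-1}$ in $\mathbf{F}_2$, so $e(\alpha+\beta) = (-1)^{a_{-1}+b_{-1}} = e(\alpha)e(\beta)$. Applying this (and noting $-f' = f'$ in characteristic $2$, though this is immaterial) gives
\[
e(-f'gh^{-1})\,e(fgh^{-1}) = e\big((f-f')gh^{-1}\big),
\]
so the inner sum becomes $\sum_{g \in \mathbf{F}_2[x]/(h)} e\big((f-f')gh^{-1}\big)$. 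By Lemma \ref{indicator_mod_r} (applied with the residue $f-f' \in \mathbf{F}_2[x]/(h)$) this equals $2^{\deg(h)}$ when $f \equiv f' \pmod{h}$ and $0$ otherwise. Hence only the term $f' = f$ survives and the whole expression collapses to $\frac{1}{2^{\deg(h)}} \cdot 2^{\deg(h)} \cdot F(f) = F(f)$, as claimed.

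The one point requiring care — and the only place anything could go wrong — is well-definedness: the quantities $e(fgh^{-1})$ must depend only on the classes of $f$ and $g$ in $\mathbf{F}_2[x]/(h)$, not on chosen representatives. This holds because replacing $f$ by $f + hk$ (or $g$ by $g + hk$) changes $fgh^{-1}$ by the polynomial $gk$ (resp. $fk$), whose $x^{-1}$-coefficient vanishes, so $e$ is unchanged; the same remark shows $\widehat{F}$ is well-defined. Beyond this bookkeeping the argument is purely formal, so I do not anticipate a genuine obstacle — Lemma \ref{indicator_mod_r} does all the real work, exactly as in the classical finite Fourier inversion formula.
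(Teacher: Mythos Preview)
Your argument is correct and is exactly the approach the paper indicates: unfold the definition of $\widehat{F}$, swap the finite sums, and apply Lemma~\ref{indicator_mod_r} to collapse the inner sum. The extra remarks on the additivity of $e$ and on well-definedness modulo $h$ are accurate and only make explicit what the paper leaves implicit.
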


This lemma can be easily proved by unfolding the definition of $\widehat{F}$ and using Lemma \ref{indicator_mod_r}.

Let us state one more lemma that will be used regularly throughout this note. Its proof can be found in \cite[Chapter 5.1]{cubics}.

\begin{lemma}
\label{exponnents_monic}
    For $\theta \in \mathbf{K}_{\infty}$, we have
    $$\sum_{\substack{f \in \zff: \\ \deg(f) < N}} e(f\theta)=\begin{cases}
        2^N & \text{if } \ord(||\theta||) < -N\\
        0 & \text{otherwise}
    \end{cases}.$$
\end{lemma}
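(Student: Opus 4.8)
The plan is to reduce the sum to the single coefficient of $\theta$ that the character $e$ actually detects. First I would write $\theta = \sum_{i \leqslant n} a_i x^i$ in $\mathbf{K}_{\infty}$ and parametrise a general polynomial of degree $< N$ as $f = \sum_{j=0}^{N-1} c_j x^j$ with each $c_j \in \mathbf{F}_2$. Multiplying the two series, the coefficient of $x^{-1}$ in the product $f\theta$ is $\sum_{j=0}^{N-1} c_j a_{-1-j}$, so by the definition of $e$ we get
$$e(f\theta) = (-1)^{\sum_{j=0}^{N-1} c_j a_{-1-j}} = \prod_{j=0}^{N-1} (-1)^{c_j a_{-1-j}}.$$

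Next I would replace the sum over $f$ with independent sums over the coordinates $(c_0,\dots,c_{N-1}) \in \mathbf{F}_2^{N}$ and factor the resulting product:
$$\sum_{\substack{f \in \zff:\\ \deg(f) < N}} e(f\theta) \;=\; \sum_{c_0,\dots,c_{N-1} \in \mathbf{F}_2} \ \prod_{j=0}^{N-1} (-1)^{c_j a_{-1-j}} \;=\; \prod_{j=0}^{N-1} \Bigl( 1 + (-1)^{a_{-1-j}} \Bigr).$$
Each factor on the right is $2$ when $a_{-1-j} = 0$ and $0$ when $a_{-1-j} = 1$.

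Finally I would translate the non-vanishing condition: the product is nonzero exactly when $a_{-1} = a_{-2} = \dots = a_{-N} = 0$, which (recalling $\|\theta\| = \sum_{i \leqslant -1} a_i x^i$) says precisely that $\ord(\|\theta\|) < -N$; in that case the product equals $2^N$, and otherwise at least one factor vanishes and the whole sum is $0$. This gives the claimed dichotomy. I do not expect any genuine obstacle here: the only points needing care are the index bookkeeping in extracting the $x^{-1}$-coefficient of $f\theta$, and --- if one wants to phrase the reduction more cleanly --- the remark that $e$ is additive on $\mathbf{K}_{\infty}$, so that $e(f\theta) = e(f\|\theta\|)$ and one may assume $\theta \in \mathbf{T}$ from the start.
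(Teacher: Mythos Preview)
Your argument is correct: the reduction to the $x^{-1}$-coefficient of $f\theta$, the factorisation over the coordinates $c_0,\dots,c_{N-1}$, and the translation of the vanishing condition into $\ord(\|\theta\|)<-N$ are all accurate, and the bookkeeping is right. The paper itself does not give a proof of this lemma but simply cites \cite[Chapter~5.1]{cubics}; your direct computation is exactly the standard one that appears there, so there is nothing to compare.
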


Let us finish the background section with some notation and definitions.

Let us denote $G_N = \{f \in \mathbf{F}_2[x] : \deg f < N\}$.

We call a polynomial `prime' if it is irreducible and non-constant (usually it is also required to be monic, but this does not matter in $\mathbf{F}_2$).

Let $\Lambda': \zff \rightarrow \mathbf{Z}$ be such that $\Lambda'(f)=\deg (f)$ if $f$ is prime and $0$ otherwise.

Let $A \lessapprox B$ mean that $A \ll N^{O(1)} \cdot B$.

We will often write $\gcd(f, g)$ simply as $(f, g)$ for $f, g$ polynomials.

In function fields, we define $\phi$, $\tau$ and $\mu$ analogously to their integer equivalents: $\phi(f)$ is the number of invertible elements of $\mathbf{F}_2[x] / (f)$, $\tau(f)$ is the number of divisors of $f$ and $\mu(f)$ equals $0$ for $f$ not squarefree and $(-1)^{\omega(f)}$, for $\omega(f)$ the number of prime divisors of $f$.

As in the integer case, for $f=\prod_{i} r_i^{\alpha_i}$ ($r_i$ prime) we have $$\phi(f) = \prod_i\big(\deg(r_i^{\alpha_i}) - \deg(r_i^{\alpha_i - 1})\big)$$ and $$\tau(f) = \prod_{i} (\alpha_i + 1).$$

\section{Van der Corput properties}
\label{section_van_der_corput}

To prove Theorem \ref{original_1_1}, we will first phrase it in terms of the van der Corput property of the set $A$ (Theorem \ref{original_2_1}), which is stronger. Before we do so, let us choose the following parameters (for $\eps>0$ arbitrary and fixed):
\begin{equation}
    K=\big(\frac{1}{8}-\eps\big)N, \ \ R=\frac{N}{4}, \ \ Q=\frac{N}{8}.
\end{equation}
Their use is as follows:
\begin{itemize}
    \item We aim to show that Theorem \ref{original_1_1} holds for $A$ of density at least $2^{N-K}$.
    \item $R$ will be used to choose the 'accuracy' (the number of terms in the Fourier expansion) of a function used to approximate $\Lambda'$.
    \item $Q$ will be used to choose the accuracy of a function approximating the number of divisors of a polynomial.
\end{itemize}

\begin{theorem}
\label{original_2_1}
    There exists a function $\Psi: \mathbf{F}_2[x] \rightarrow \mathbf{R}$ such that
    \begin{enumerate}
        \item $\Psi(f)=0$ if $f \notin \{r-1 : r \in G_N, \ r \pprime\}$
        \item For any $\theta \in \mathbf{T}$, we have $\sum_{f \in G_N} \Psi(f) e (f\theta) \geqslant -2^{N-K}$
        \item $\sum_{f \in G_N} \Psi(f) \gg 2^N$
    \end{enumerate}
\end{theorem}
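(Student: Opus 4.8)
\emph{Proof strategy.} The plan is to produce $\Psi$ explicitly and then verify the three properties by the circle method. The first guess for $\Psi$ is the (renormalised) shifted von Mangoldt function $f\mapsto\Lambda'(f+1)\mathbf{1}_{G_N}(f+1)$: this is supported on $\{r-1:r\in G_N,\ r\ \pprime\}$, so (1) is trivial, while (3) is immediate from the prime polynomial theorem in $\mathbf{F}_2[x]$ (there are $2^n/n+O(2^{n/2})$ primes of degree $n$, so the total mass is $(1+o(1))2^N$). The obstruction to (2) is that the exponential sums of this function are far too negative near rationals $a/h$ with $\mu(h)=-1$ and $\phi(h)$ small — already $\theta=1/(x^2+x+1)$ contributes a value of order $-2^N$ — so the real task is to choose $\Psi$ (this is the function defined in~\eqref{psi_def}, together with the auxiliary $\Psi'$ of~\eqref{psi_prime_def}; in characteristic $2$ it can be taken with just two terms) as a suitably weighted shifted prime measure whose singular-series contribution at small moduli is non-negative, without spoiling (1) or (3). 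Property (1) persists by construction, and (3) persists because the weight is normalised to have average $1$, so the exact main-term evaluation (Proposition~\ref{original_3_3}) still gives $\sum_f\Psi(f)=(1+o(1))2^N\gg 2^N$.

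For (2) I would carry out a Farey-type dissection of $\mathbf{T}$. By Lemma~\ref{exponnents_monic} the "major arc'' around a rational $a/h$ shrinks to the single point where $\theta$ agrees with $a/h$ to order $N$, so the dissection is simply: the points $[a/h]_N$ with $\deg h<R$ (major), and everything else (minor). On a major arc I would expand $\sum_f\Psi(f)e(f\theta)$ through residue classes modulo $h$ and apply the prime polynomial theorem in arithmetic progressions — unconditional in the function field setting, since the Riemann Hypothesis for curves over finite fields supplies square-root cancellation (Proposition~\ref{section_3_1}). This writes $\sum_f\Psi(f)e(f\theta)$ as $2^N$ times the singular-series factor plus an error; with $\deg h<R=N/4$ the total error is $\lessapprox 2^{N/2+R}=2^{3N/4}\ll 2^{N-K}=2^{(7/8+\eps)N}$, and the construction of $\Psi$ forces the singular-series factor to be $\ge 0$ at every modulus for which $\phi(h)$ is too small to hide the contribution inside the error term; this gives the one-sided bound on the major arcs. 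On a minor arc I would invoke a Weyl-type estimate for $\sum_f\Lambda'(f+1)e(f\theta)$ (via Vaughan's identity in $\mathbf{F}_2[x]$), which the weight only degrades by a $\lessapprox$-factor; this is Proposition~\ref{original_3_4}, and it is shorter here because only two Fourier series are involved. Assembling the two ranges yields (2).

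The main obstacle is the construction of $\Psi$ itself: one must find a weight on the shifted primes that simultaneously (i) makes the major-arc singular-series factor non-negative at every modulus which is not automatically negligible, (ii) preserves $\sum_f\Psi(f)\gg 2^N$, and (iii) does not inflate the arithmetic-progression error terms beyond $2^{N-K}$. These demands pull against one another, and the way they balance is exactly what pins down the admissible range of $K$, hence the exponent in Theorem~\ref{original_1_1}. It is the characteristic-$2$ structure — which is what permits the two-term shape of $\Psi$ and $\Psi'$ — that lets the balance be struck at $K=(\tfrac18-\eps)N$ rather than at $(\tfrac1{12}-\eps)N$, improving the exponent from $\tfrac{11}{12}$ to $\tfrac78$.
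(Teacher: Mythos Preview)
Your high-level picture is right --- weight the shifted primes so that the major-arc singular series becomes non-negative, then run a circle-method argument --- but the mechanism you describe diverges from the paper's in a way that matters, and your identification of the propositions is off.

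The paper does \emph{not} use a Vaughan/Type~I--II decomposition on the minor arcs. Instead, Proposition~\ref{original_3_1} replaces $\Lambda'$ by the truncated sieve model $\Lambda_R$ \emph{uniformly over all} $\theta\in\mathbf{T}$, not just on major arcs; the input is the explicit formula for $\sum_{\deg f=k}\Lambda'(f)e(f\theta)$ from Hayes (Lemma~\ref{grhlemma}), which already encodes GRH for function fields and gives a power saving at every $\theta$. Once $\Psi$ is swapped for $\Psi'=\Lambda_R(f+1)H_Q(f)\mathbf{1}_{G_N}$, the function $\Lambda_R(f+1)H_Q(f)$ has Fourier support on rationals with squarefree denominator of degree $\le R+Q$ and coefficients bounded by $N^{O(1)}\tau(s)^9/2^{\deg s}$ (Section~\ref{section_fourier_bounds}). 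Proposition~\ref{original_3_2} is then just the observation that $\sum_{f\in G_N}\Psi'(f)e(f\theta)$ can only be large when $\theta$ lies within $2^{-N}$ of a rational with small denominator --- an elementary consequence of Lemma~\ref{exponnents_monic} and the coefficient bound, with no bilinear estimates anywhere.

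Consequently your description of Proposition~\ref{original_3_4} is wrong: it is not a minor-arc Weyl estimate at all, but the comparison $|\beta^{\trunc}(\lambda)-\beta(\lambda)|\le\tfrac15 2^{-K}$ between the Fourier coefficients of the truncated product $\Lambda_R(f+1)H_Q(f)$ and the untruncated $\widetilde{\Lambda}_R(f+1)\widetilde{H}_R(f)$. This is needed because $\beta^{\trunc}$ is what actually appears in the exact evaluation of Proposition~\ref{original_3_3}, while it is $\beta$ that factors over primes and is visibly non-negative (Proposition~\ref{original_3_6}). The ``only two Fourier series'' remark in the introduction refers to the fact that in characteristic~$2$ the product has two factors rather than three, shortening this comparison --- not to a bilinear sum.

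Your own proposed route (Vaughan on the minor arcs) is not obviously hopeless in $\mathbf{F}_2[x]$, but you would need a genuine power saving of size $2^{-N/8}$ uniformly over minor-arc $\theta$ after absorbing the weight $H_Q$; you have asserted this rather than argued it, and it is exactly the step the paper's strategy is designed to avoid.
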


\begin{proof}[Proof that Theorem \ref{original_2_1} implies Theorem \ref{original_1_1}]
    Suppose that such $\Psi$ exists. Then let us define a cosine polynomial-like function
    \begin{equation}
        T(\theta):=\Big( 2^{N-K} + \sum_{f \in G_N} \Psi(f) e(f\theta) \Big) \Big/ \Big( 2^{N-K} + \sum_{f \in G_N} \Psi(f) \Big).
    \end{equation}
    Let us note that $T(0)=1$ and that for any $\theta \in \mathbf{K}_{\infty}$ we have $T(\theta) \geqslant 0$. Moreover, we can write
    \begin{equation}
        T(\theta)=a_0+\sum_{\substack{r \in G_N, \\ r \pprime}} a_{r-1} e \big((r-1)\theta\big)
    \end{equation}
    for $a_i \in \mathbf{R}$ and
    \begin{equation}
        a_0=2^{N-K} \Big/ \Big( 2^{N-K} + \sum_{f \in G_N} \Psi(f) \Big) \ll 2^{N-K} \Big/ \Big( 2^{N-K}+2^N \Big) \ll 2^{-K}=2^{(-\frac{1}{8}+\eps)N}.
    \end{equation}
    Now let us suppose that $A \subset G_N$ is such that $A-A$ does not contain any $r-1$ for $r$ irreducible. Then we have
    $$|A|^2 \leqslant \sum_{f \in G_{N}} \bigg| \sum_{g \in G_N} 1_A(g) e(fg x^{-N}) \bigg|^2 T(f x^{-N})$$ (as $f=0$ contributes $|A|^2$ and the other contributions are non-negative). The RHS equals
    $$2^Na_0|A| + \sum_{\substack{g_1, g_2 \in A, \\ r \in G_N}} a_{r-1}\sum_{f \in G_{N}} e\big(f(g_1-g_2-(r-1))x^{-N}\big).$$
    From the assumption on $A$, $g_1 - g_2 \neq r-1$ for any $g_1, g_2, r$, so (by Lemma \ref{indicator_mod_r}) the last sum vanishes. Hence $|A|^2 \leqslant 2^Na_0|A|$, so $|A| \leqslant 2^N a_0 \ll_{\eps} 2^{( \frac{7}{8} + \eps )N}$, and therefore Theorem \ref{original_2_1} implies Theorem \ref{original_1_1}.
\end{proof}

In the rest of the note we prove Theorem \ref{original_2_1}.

\section{Definitions and outline of the proof of Theorem \ref{original_2_1}}
\label{main_section}

We start with a few definitions. For $r$ prime and $f \in \zff / (r)$, let
\begin{equation}
    \Lambda_r(f) := \begin{cases}
    0 & \text{if } r \mid f\\
    \frac{2^{\deg(r)}}{2^{\deg(r)} - 1} & \text{otherwise}
\end{cases}
\end{equation}
and
\begin{equation}
    \tau^2_r(f) := \begin{cases}
    4 \cdot \frac{2^{\deg(r)}}{2^{\deg(r)} + 3} & \text{if } r \mid f\\
    \frac{2^{\deg(r)}}{2^{\deg(r)} + 3} & \text{otherwise}
\end{cases}.
\end{equation}
Furthermore, let
\begin{equation}
    \widetilde{\Lambda}_Q(f) := \prod_{r \in G_Q} \Lambda_r(f)
\end{equation}
and
\begin{equation}
    \widetilde{H}_Q(f) := \prod_{r \in G_Q} \tau^2_r(f).
\end{equation}

\begin{remark}
\normalfont
    The function $\Lambda_r(f)$ is a weighted (so that its mean value is $1$) indicator function that checks whether $f$ is divisible by $r$. The product $\widetilde{\Lambda}_Q(f)$ is a weighted approximation of an indicator function of prime polynomials.

    The product $\widetilde{H}_Q(f)$ is an approximation of $\tau^2(f)$. The idea behind the approximation is that for $r$ prime, polynomials divisible by $r$ will have 'on average' twice as many divisors as polynomials not divisible by $r$ (as for $r | f$, any divisor $r \nmid g$ of $\frac{f}{r}$ will contribute $g$ and $gr$ to the set of divisors of $f$). 
\end{remark}

We observe that if we write $\Lambda_r, \tau^2_r$ in terms of their Fourier coefficients modulo $r$, we get
\begin{equation}
    \Lambda_r(f) = 1 - \frac{1}{2^{\deg(r)} - 1} \sum_{g \in \zffmodinv} e(fgr^{-1})
\end{equation}
and
\begin{equation}
    \tau^2_r(f)=1 + \frac{3}{2^{\deg(r)} + 3} \sum_{g \in \zffmodinv} e(fgr^{-1}).
\end{equation}
We can easily check that these formulae hold by using Lemma \ref{indicator_mod_r}.

From these, we can also get Fourier expansions for $\widetilde{\Lambda}_Q, \ \widetilde{H}_Q$
\begin{equation}
\label{lambda_tilde}
    \widetilde{\Lambda}_Q(f) = \sum_{s \mid P_Q} \alpha
(s) \sum_{t \in \zffmodinvs} e\Big(\frac{ft}{s}\Big),
\end{equation}
\begin{equation}
\label{h_tilde}
    \widetilde{H}_Q(f) = \sum_{s \mid P_Q} \alpha'
(s) \sum_{t \in \zffmodinvs} e\Big(\frac{ft}{s}\Big),
\end{equation}
where $P_Q$ is a product of all primes of degree at most $Q$ and $\alpha(s), \alpha'(s)$ are $0$ if $s$ is not square-free and otherwise they are defined as:
\begin{equation}
\label{alpha}
    \alpha(s):=\prod_{\substack{r \mid s, \\ r \pprime}} \frac{-1}{2^{\deg(r)} - 1}=\frac{\mu(s)}{\phi(s)},
\end{equation}
\begin{equation}
\label{alpha_prime}
    \alpha'(s):=\prod_{\substack{r \mid s, \\ r \pprime}} \frac{3}{2^{\deg(r)} + 3}.
\end{equation}
Based on these, we define the Fourier-truncated variants of these functions
\begin{equation}
\label{lambda_def}
    \Lambda_Q(f) := \sum_{s \in G_Q} \alpha(s) \sum_{t \in \zffmodinvs} e\Big(\frac{ft}{s}\Big)
\end{equation}
and
\begin{equation}
\label{h_def}
    H_Q(f) := \sum_{s \in G_Q} \alpha'(s) \sum_{t \in \zffmodinvs} e\Big(\frac{ft}{s}\Big).
\end{equation}
We note that $|\alpha(s)|, |\alpha'(s)| \leqslant \frac{\tau(s)^2}{2^{\deg(s)}}$ (where $\tau(s)$ is the number of divisors of $s$).

Finally, let us define the function $\Psi$ which we will show has the desired properties from Theorem \ref{original_2_1} as follows:
\begin{equation}
\label{psi_def}
    \Psi(f) := \Lambda'(f+1)H_Q(f) \cdot 1_{f \in G_N}.
\end{equation}
Let us also define
\begin{equation}
\label{psi_prime_def}
    \Psi'(f)=\Lambda_R(f+1)H_Q(f) \cdot 1_{f \in G_N}.
\end{equation}
Moreover, let $\beta^{\trunc}, \beta$ be such that
\begin{equation}
\label{beta_trunc_def}
    \Lambda_R(f+1)H_Q(f)=\sum_{\lambda \in \mathbf{F}_2(x) / \mathbf{F}_2[x]}\beta^{\trunc}(\lambda)e(\lambda f)
\end{equation}
and
\begin{equation}
\label{beta_def}
    \widetilde{\Lambda}_R(f+1)\widetilde{H}_R(f)=\sum_{\lambda \in \mathbf{F}_2(x) / \mathbf{F}_2[x]} \beta(\lambda)e(\lambda f).
\end{equation}
We note that $\beta, \betatrunc$ are real.

We will show that $\Psi$ satisfies the desired properties in the following steps:

\begin{proposition}
\label{original_3_1}
    For any $\theta \in \mathbf{T}$, we have $$\bigg| \sum_{f \in G_N} \big( \Psi(f) - \Psi'(f) \big) e(f\theta) \bigg| \leqslant \frac{1}{3}2^{N-K}.$$
\end{proposition}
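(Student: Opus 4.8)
The plan is to work entirely on the Fourier side. Write $\Psi(f)-\Psi'(f)=\big(\Lambda'(f+1)-\Lambda_R(f+1)\big)H_Q(f)1_{f\in G_N}$, substitute the Fourier expansion (\ref{h_def}) of $H_Q$, and make the change of variables $g=f+1$ (a bijection of $G_N$; since $e$ is additive, $e(f\theta)e(ft/s)=e\big(g(\theta+t/s)\big)e(\theta+t/s)$). This reduces the proposition to bounding
\[
\sum_{s\in G_Q}|\alpha'(s)|\sum_{t\in\zffmodinvs}\big|S(\theta+t/s)\big|,\qquad S(\psi):=\sum_{g\in G_N}\big(\Lambda'(g)-\Lambda_R(g)\big)e(g\psi).
\]
Using $|\alpha'(s)|\le\tau(s)^2/2^{\deg(s)}$ and the standard bound $\sum_{s\in G_Q}\tau(s)^2\lessapprox 2^{Q}$ one gets $\sum_{s\in G_Q}|\alpha'(s)|\phi(s)\lessapprox 2^{Q}=2^{N/8}$, so it is enough to show $|S(\psi)|\lessapprox 2^{3N/4}$ uniformly in $\psi$: then $2^{N/8}\cdot 2^{3N/4}=2^{7N/8}$, and the polynomial loss and the factor $\tfrac13$ are swallowed by the $\eps$ in $K=(\tfrac18-\eps)N$ once $N$ is large.

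The model piece is evaluated exactly by Lemma \ref{exponnents_monic}: $\sum_{g\in G_N}\Lambda_R(g)e(g\psi)=2^N\sum_{(v,b)}\alpha(b)$, summed over reduced $v/b$ with $b\in G_R$ and $\ord\|\psi+v/b\|<-N$. Two such fractions would differ by a non-zero element of $\mathbf{T}$ of denominator-degree $<2R=N/2<N$, which is impossible; so this is either $0$ or $2^N\mu(b_0)/\phi(b_0)$ for a unique $b_0$ with $\deg b_0<R$, and in the latter case moreover $e(g\psi)=e(gv_0/b_0)$ for all $g\in G_N$. If such $b_0$ exists (``major'' $\psi$) then $S(\psi)=\sum_{g\in G_N}\Lambda'(g)e(gv_0/b_0)-2^N\mu(b_0)/\phi(b_0)$; I would evaluate the first sum by orthogonality over residues modulo $b_0$ together with the prime number theorem for arithmetic progressions in $\mathbf{F}_2[x]$, whose error term comes from the Riemann hypothesis for curves — i.e.\ the (now proven) GRH for function fields. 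Since $\deg b_0<R=N/4$ this matches the model term up to an error $\ll\phi(b_0)\deg(b_0)2^{N/2}\lessapprox 2^{R}2^{N/2}=2^{3N/4}$, the prime-power terms and the primes dividing $b_0$ being negligible.

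If no such $b_0$ exists (``minor'' $\psi$) the model contributes nothing and we must bound the genuine prime exponential sum $\sum_{g\in G_N}\Lambda'(g)e(g\psi)$ at a point with no small-denominator approximation. Splitting by degree loses (the degree-$n$ pieces need not be small), so instead I would use $\Lambda(g)=-\sum_{d\mid g}\mu(d)\deg(d)$ — replacing $\Lambda'$ by $\Lambda$ costs only $O(2^{N/2})$ — and sum over the cofactor first, so that Lemma \ref{exponnents_monic} collapses the inner sum and gives
\[
\sum_{g\in G_N}\Lambda(g)e(g\psi)=-2^N\!\!\sum_{\substack{d:\ \deg d<N\\ \ord\|d\psi\|<\deg d-N}}\!\!\frac{\mu(d)\deg d}{2^{\deg d}}.
\]
For a minor $\psi$ no $d$ of degree $<R$ occurs, and the tail $R\le\deg d<N$ is controlled by Möbius cancellation over the cosets of the subspaces cut out by the digit conditions ``$\ord\|d\psi\|<\deg d-N$'', again via the function-field Riemann hypothesis (now applied to $\sum_{\deg d=k}\mu(d)\chi(d)$ for additive characters $\chi$), which should keep the total $\lessapprox 2^{3N/4}$. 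Feeding both cases into the display of the first paragraph finishes the proof.

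The part I expect to be the real obstacle is exactly this last estimate: making the Möbius cancellation in the tail strong enough to beat $2^{3N/4}$ for every $\psi$, and tracking the numerology (the interaction of $K=(\tfrac18-\eps)N$, $R=N/4$, $Q=N/8$) carefully enough to leave the required factor $\tfrac13$. The rest is a transcription of the integer argument of \cite{original}, with the crucial simplification that here the relevant prime-number-theorem error term is a theorem rather than a hypothesis.
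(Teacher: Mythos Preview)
Your reduction to a uniform bound $|S(\psi)|\lessapprox 2^{3N/4}$ via the Fourier expansion of $H_Q$, and your evaluation of the model piece $S_2$, match the paper exactly. The divergence is in how $S_1=\sum_{g\in G_N}\Lambda'(g)e(g\psi)$ is handled. The paper does \emph{not} split into major and minor arcs here. It takes a single Dirichlet approximation $\psi=-u/s+\eta$ at level $N/2-Q$ and quotes Hayes' lemma (a consequence of RH for Hecke $L$-functions over $\mathbf{F}_2[x]$), which gives, for every $n\ge N-2Q$,
\[
\sum_{\deg f=n}\Lambda'(f)e(f\psi)=\frac{\mu(s)}{\phi(s)}\,2^n c(n,\eta)+O\big(n\,2^{3n/4}\big),\qquad c(n,\eta)\in\{-1,0,1\}.
\]
The error terms sum to $\lessapprox 2^{3N/4}$; the main terms, summed over $n$, reproduce $S_2$ up to $O(2^{N-R})$; and degrees $n<N-2Q$ are bounded trivially. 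So your premise that ``splitting by degree loses'' is precisely where you part ways with the paper: the degree-$n$ pieces are not individually small, but Hayes' lemma isolates their main terms, and those are what cancel against $S_2$.

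Your alternative minor-arc route via $\Lambda=-\mu\ast\deg$ has a genuine gap, not just a bookkeeping issue. After your identity you must bound $\sum_{k\ge R}k\,2^{N-k}|T_k|$, where $T_k=\sum_{\deg d=k,\ d\in\mathcal S_k}\mu(d)$ and $\mathcal S_k$ is an affine subspace of size about $2^{2k-N}$ for $k\ge N/2$. The trivial bound gives $\sum_k k\,2^k\approx N2^N$; to reach $2^{3N/4}$ you would need $|T_k|\lessapprox |\mathcal S_k|^{1/2}$, i.e.\ full square-root cancellation of $\mu$ on an arbitrary affine subspace. Detecting $\mathcal S_k$ by characters gives $T_k=2^{-(N-k)}\sum_{m\in G_{N-k}}M(k,m\psi)$ with $M(k,\xi)=\sum_{\deg d=k}\mu(d)e(d\xi)$, so a uniform pointwise bound $|M(k,\xi)|\le C_k$ only yields $|T_k|\le C_k$; for $k$ near $N/2$ this would force $C_k\lessapprox 1$, far beyond square-root. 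There is no ``RH for additive characters'' delivering this: decomposing $e(d\xi)$ into Hecke characters $\chi$, the relevant object is $1/L(T,\chi)$, whose $k$-th coefficient is bounded only by $\binom{k+D-1}{D-1}2^{k/2}$ with $D=\deg L$ of size up to $k$. Hayes' lemma for $\Lambda'$ succeeds because $L'/L$ contributes only $D$ zeros, each of weight $2^{k/2}$; the reciprocal $1/L$ has no such economy. The paper's use of Hayes is exactly what sidesteps the obstacle you identify at the end.
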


\begin{proposition}
\label{original_3_2}
    Suppose that $$\bigg| \sum_{f \in G_N} \Psi'(f) e(f\theta) \bigg| \geqslant \frac{1}{3}2^{N-K}.$$
    Then there exists a polynomial $s$ of degree $\leqslant (1+\eps)K+O_{\eps}(\log N)$ and a polynomial $u$ such that $\ord(\theta - \frac{u}{s}) < -N$.
\end{proposition}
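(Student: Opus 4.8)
The plan is to turn the hypothesis into a pointwise upper bound for the Fourier coefficient $\betatrunc$ at a single frequency, and then to estimate that coefficient by hand. First I would expand the exponential sum over $f$ using (\ref{beta_trunc_def}) and Lemma \ref{exponnents_monic}:
$$\sum_{f\in G_N}\Psi'(f)e(f\theta)=\sum_{\lambda}\betatrunc(\lambda)\sum_{f\in G_N}e\big(f(\lambda+\theta)\big)=2^N\sum_{\lambda:\ \ord(\|\lambda+\theta\|)<-N}\betatrunc(\lambda).$$
From (\ref{lambda_def}) and (\ref{h_def}) every frequency in the (finite) support of $\betatrunc$ has a denominator dividing a product of an element of $G_R$ and one of $G_Q$, hence of degree $<R+Q<N$; since $\mathbf{K}_{\infty}$ is non-archimedean, two distinct such frequencies differ by a nonzero element of order $>-N$, so at most one of them can lie within $x^{-N}$ of $\theta$ (recall $-\theta=\theta$ in characteristic $2$). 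As the hypothesis makes the sum nonzero, there is then a unique reduced fraction $\lambda_0=u/s$ with $\ord(\theta-u/s)<-N$ carrying the entire sum, so $|\betatrunc(u/s)|\geqslant\tfrac13 2^{-K}$, and it remains to show this forces $\deg s$ to be small.

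Next I would make $\betatrunc(u/s)$ explicit. By (\ref{lambda_def}) the coefficient of $e(\lambda_1 f)$ in $\Lambda_R(f+1)$ is $\alpha(s_1)e(u_1/s_1)$ when $\lambda_1=u_1/s_1$ is in lowest terms with $s_1\in G_R$ (and $0$ otherwise), and by (\ref{h_def}) the coefficient of $e(\lambda_2 f)$ in $H_Q(f)$ is $\alpha'(s_2)$ when $\lambda_2=u_2/s_2$ is in lowest terms with $s_2\in G_Q$; since $\lambda_1$ is forced to be $u/s+\lambda_2$, convolving the two Fourier series gives
$$\betatrunc(u/s)=\sum_{\substack{s_2\in G_Q\\ u_2\in\zffmodinvs}}\alpha'(s_2)\,\alpha(s_1)\,e(u_1/s_1),\qquad \frac{u_1}{s_1}:=\frac{u}{s}+\frac{u_2}{s_2}\ \text{in lowest terms},$$
the terms with $s_1\notin G_R$ vanishing. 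As $\alpha,\alpha'$ are supported on squarefree polynomials, $\betatrunc(u/s)=0$ unless $s$ is squarefree, so I may assume it is. It then suffices to prove $|\betatrunc(u/s)|\lessapprox 2^{-(1-\eps')\deg s}$ for an arbitrarily small fixed $\eps'>0$: together with $|\betatrunc(u/s)|\geqslant\tfrac13 2^{-K}$ this gives $(1-\eps')\deg s\leqslant K+O_{\eps'}(\log N)$, and choosing $\eps'$ with $(1-\eps')^{-1}\leqslant 1+\eps$ yields $\deg s\leqslant(1+\eps)K+O_{\eps}(\log N)$.

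For the coefficient estimate I would fix $s$ squarefree and, for each $s_2$, write $\delta=\gcd(s,s_2)$, $\sigma=s/\delta$, $\sigma_2=s_2/\delta$. A short computation shows the reduced denominator is $s_1=(\delta/g)\,\sigma\,\sigma_2$ with $g=\gcd(u\sigma_2+u_2\sigma,\delta)$, the three factors pairwise coprime, so that $\phi(s_1)=\phi(\delta/g)\,\phi(\sigma)\,\phi(\sigma_2)$. The key observation is that $g$ depends only on $u_2\bmod\delta$ and that, for each prime $r\mid\delta$, the condition $r\mid g$ pins $u_2$ to a single invertible class modulo $r$; a CRT computation then gives
$$\sum_{u_2\in\zffmodinvs}\frac{1}{\phi(s_1)}=\frac{1}{\phi(\sigma)}\prod_{r\mid\delta}\frac{2^{\deg r+1}-3}{2^{\deg r}-1}\ \leqslant\ \frac{2^{\omega(\delta)}}{\phi(s/\delta)}.$$
Inserting $|\alpha'(s_2)|=\prod_{r\mid s_2}\frac{3}{2^{\deg r}+3}$, the sum over $\sigma_2$ coprime to $s$ is bounded by an Euler product $\exp\big(3\sum_{\deg r<Q}2^{-\deg r}\big)\lessapprox Q^3$, while the remaining multiplicative sum over $\delta\mid s$ equals $\prod_{r\mid s}\big(\frac{6}{2^{\deg r}+3}+\frac{1}{2^{\deg r}-1}\big)\leqslant\prod_{r\mid s}\frac{14}{2^{\deg r}}=14^{\omega(s)}2^{-\deg s}$. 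Hence $|\betatrunc(u/s)|\lessapprox 14^{\omega(s)}2^{-\deg s}$, and the function-field divisor bound $\omega(s)=O(\deg s/\log\deg s)$ upgrades this to the required $|\betatrunc(u/s)|\lessapprox 2^{-(1-\eps')\deg s}$.

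The step I expect to be the main obstacle is this last coefficient estimate, and in particular the evaluation of $\sum_{u_2}\phi(s_1)^{-1}$: the naive bound — using only $\deg s_1\geqslant\deg s-\deg s_2$ and estimating $\phi(s_1)^{-1}$ over all $u_2$ trivially — loses a factor of about $2^{2Q}$ and is useless, so one must genuinely exploit that for all but a negligible proportion of residues $u_2$ the reduced denominator $s_1$ has degree close to $\deg s+\deg s_2-2\deg\delta$.
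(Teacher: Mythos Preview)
Your argument is correct, and the overall shape is the same as the paper's: isolate a single surviving Fourier frequency $u/s$, then show that $|\betatrunc(u/s)|$ being at least $\tfrac13 2^{-K}$ forces $\deg s$ to be small. The two proofs diverge in how each of these steps is carried out.

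For the first step, the paper begins with Dirichlet approximation (Lemma~\ref{dirichlet}) to write $\theta=-u/s+\eta$ with $\deg s<R+Q+1$ and $\ord(\eta)\leqslant -R-Q-1-\deg s$, and then checks that all other frequencies in the support of $\betatrunc$ are killed by Lemma~\ref{exponnents_monic}. You instead argue directly from the ultrametric: since every frequency in the support has denominator of degree $<R+Q<N/2$, two distinct ones differ by something of order $>-N$, so at most one can be $x^{-N}$-close to $\theta$. These are equivalent here, and your phrasing is a bit more streamlined.

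For the coefficient bound, the paper quotes the general product lemma proved earlier (Lemma~\ref{original_4_2}), which gives $\Lambda_R(f+1)H_Q(f)\in N^{O(1)}\mathcal{C}_9(R+Q)$ and hence $|\betatrunc(u/s)|\lessapprox \tau(s)^9/2^{\deg s}$; the divisor bound then finishes. You do not invoke Lemma~\ref{original_4_2} at all, but instead compute $\sum_{u_2}\phi(s_1)^{-1}$ explicitly via CRT and sum multiplicatively over $\delta\mid s$ and $\sigma_2$, obtaining $|\betatrunc(u/s)|\lessapprox C^{\omega(s)}2^{-\deg s}$. This is marginally sharper (and your explicit identity $\sum_{u_2}\phi(s_1)^{-1}=\phi(\sigma)^{-1}\prod_{r\mid\delta}(2^{\deg r+1}-3)/(2^{\deg r}-1)$ checks out), though after the divisor bound the two are equivalent. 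The trade-off is modularity: the paper's Lemma~\ref{original_4_2} is a reusable black box that feeds into other parts of the argument, whereas your hands-on estimate is tailored to this single proposition.
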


\begin{proposition}
\label{original_3_3}
    Suppose that $\theta$ is as in Proposition \ref{original_3_2}. Then we have
    $$\sum_{f \in G_N} \Psi'(f) e(f\theta) = \beta^{\trunc}(\frac{u}{s})\cdot 2^N.$$
\end{proposition}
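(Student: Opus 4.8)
The plan is to use that $\Psi'$ is supported on $G_N$ and that, by its definition (\ref{psi_prime_def}) together with (\ref{lambda_def}) and (\ref{h_def}), the function $f \mapsto \Lambda_R(f+1)H_Q(f)$ has a \emph{finite} Fourier expansion (\ref{beta_trunc_def}) supported on frequencies of small denominator, so that the exponential sum against such a $\theta$ just reads off a single Fourier coefficient. First I would reduce to evaluating at $u/s$ exactly: since $e$ is additive, $e(\alpha+\beta)=e(\alpha)e(\beta)$, and for $f \in G_N$ one has $\ord\!\big(f(\theta-\tfrac us)\big)=\deg f + \ord(\theta-\tfrac us) \leqslant (N-1)+(-N-1) < -1$, so $e\!\big(f(\theta-\tfrac us)\big)=1$ and hence $e(f\theta)=e\!\big(f\tfrac us\big)$. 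Thus $\sum_{f \in G_N}\Psi'(f)e(f\theta) = \sum_{f \in G_N}\Lambda_R(f+1)H_Q(f)\,e\!\big(f\tfrac us\big)$.

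Next I would substitute (\ref{beta_trunc_def}) and interchange the (finite) sums to get $\sum_{\lambda}\beta^{\trunc}(\lambda)\sum_{f \in G_N}e\!\big((\lambda+\tfrac us)f\big)$, and apply Lemma \ref{exponnents_monic}: the inner sum is $2^N$ when $\ord\!\big(\lVert \lambda+\tfrac us\rVert\big) < -N$ and $0$ otherwise. The crux is then to show that only $\lambda \equiv u/s \pmod{\mathbf{F}_2[x]}$ contributes. From (\ref{lambda_def})--(\ref{h_def}), every $\lambda$ with $\beta^{\trunc}(\lambda) \neq 0$ has a representative of the form $t'/s' + t''/s''$ with $\deg s' < R$ and $\deg s'' < Q$, hence can be written as a fraction of denominator degree $< R+Q$. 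Combining this with the bound $\deg s \leqslant (1+\eps)K + O_{\eps}(\log N)$ from Proposition \ref{original_3_2}, the element $\lambda + u/s$ has a reduced representative $p/q$ with $\deg q < R+Q+(1+\eps)K + O_{\eps}(\log N)$; since $\tfrac14 + \tfrac18 + (1+\eps)(\tfrac18-\eps) < \tfrac12$, this is $< N$ for $N$ large. If $\lambda \not\equiv u/s$ then $\lambda + u/s \not\equiv 0$ (in characteristic $2$ the sign is irrelevant), so $p \neq 0$ and $\deg q \geqslant 1$, whence $\lVert \lambda + u/s\rVert = p/q$ with $\deg p < \deg q$ and $\ord\!\big(\lVert\lambda+u/s\rVert\big) = \deg p - \deg q \geqslant -\deg q > -N$; so Lemma \ref{exponnents_monic} makes that term vanish. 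If $\lambda \equiv u/s$ then $\lambda + u/s \equiv 0$, $\lVert\lambda+u/s\rVert = 0$, and the inner sum is $2^N$. Hence only the term $\lambda \equiv u/s$ survives and, since $\beta^{\trunc}$ is a well-defined function on $\mathbf{F}_2(x)/\mathbf{F}_2[x]$, $\sum_{f \in G_N}\Psi'(f)e(f\theta) = \beta^{\trunc}(u/s)\cdot 2^N$.

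I do not expect a genuine obstacle here: the only delicate point is the degree bookkeeping ensuring that $\lambda + u/s$ is never "$N$-adically small" unless it is zero, which is exactly why $K$, $R$, $Q$ were chosen so that $R+Q+(1+\eps)K$ sits comfortably below $N$. Once the additivity of $e$ and Lemma \ref{exponnents_monic} are in hand, the rest is formal.
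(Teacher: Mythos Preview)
Your argument is correct and is essentially the paper's own proof, just with the details spelled out: you expand $\Psi'$ via \eqref{beta_trunc_def}, apply Lemma~\ref{exponnents_monic}, and use the degree bound on $\denom(\lambda)$ together with $\deg s \leqslant (1+\eps)K+O_\eps(\log N)$ to kill all terms with $\lambda \not\equiv u/s$. The only cosmetic difference is that you first strip off $\eta$ by noting $e(f\eta)=1$ for $f\in G_N$, whereas the paper keeps $\eta$ in the exponential and simply refers back to the vanishing argument already carried out in the proof of Proposition~\ref{original_3_2}.
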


\begin{proposition}
\label{original_3_4}
    Suppose that $\deg (\denom(\lambda)) \leqslant (1+\eps)K+O_{\eps}(\log N)$. Then $$\Big| \betatrunc(\lambda) - \beta(\lambda) \Big| \leqslant \frac{1}{5}2^{-K}.$$
\end{proposition}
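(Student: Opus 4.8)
The plan is to compare the two Fourier expansions frequency by frequency, isolating the part of the discrepancy that can survive the hypothesis that $\denom(\lambda)$ has small degree. Write $m=\denom(\lambda)$, so $\deg m\le(1+\eps)K+O_\eps(\log N)$. Every frequency occurring in (\ref{beta_def}) or (\ref{beta_trunc_def}) has squarefree denominator dividing $P_R$, so I may assume $m$ squarefree and $m\mid P_R$ (otherwise both sides vanish). First I would split
\[
\widetilde\Lambda_R(f+1)\widetilde H_R(f)-\Lambda_R(f+1)H_Q(f)=\big(\widetilde\Lambda_R(f+1)-\Lambda_R(f+1)\big)H_Q(f)+\widetilde\Lambda_R(f+1)\big(\widetilde H_R(f)-H_Q(f)\big),
\]
so that $\beta(\lambda)-\betatrunc(\lambda)$ is the sum of the $\lambda$-Fourier coefficients of the two products on the right. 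The first coefficient vanishes: by the scale-$R$ analogue of (\ref{lambda_tilde}) together with (\ref{lambda_def}), the factor $\widetilde\Lambda_R(f+1)-\Lambda_R(f+1)$ is supported on frequencies $t_1/s_1$ with $s_1\mid P_R$ and $\deg s_1\ge R$, while $H_Q$ is supported on frequencies $t_2/s_2$ with $\deg s_2<Q$; if $t_1/s_1+t_2/s_2=\lambda$ then $t_1/s_1=\lambda-t_2/s_2$ has denominator dividing $\lcm(m,s_2)$, forcing $\deg s_1\le\deg m+\deg s_2<(1+\eps)K+O_\eps(\log N)+Q<R$ for $N$ large (by the choice of $K,Q,R$), a contradiction. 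Using $e((f+1)\theta)=e(\theta)e(f\theta)$ and comparing (\ref{h_tilde})--(\ref{h_def}), we are left with
\[
\beta(\lambda)-\betatrunc(\lambda)=e(\lambda)\sum_{\substack{s_2\mid P_R\\\deg s_2\ge Q}}\alpha'(s_2)\sum_{t_2\in\zffmi{s_2}}\alpha\!\big(\denom(\lambda-\tfrac{t_2}{s_2})\big)\,e\!\big(\tfrac{t_2}{s_2}\big),
\]
with the convention that $\alpha$ vanishes on non-squarefree arguments (cf. (\ref{alpha})).

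Next I would evaluate the inner sum for fixed $s_2$. Put $g=(m,s_2)$, $m=gm'$, $s_2=gs_2'$; then $m',s_2',g$ are pairwise coprime and squarefree. A short computation with fractions shows $\denom(\lambda-\tfrac{t_2}{s_2})=m's_2'd_1$ for a divisor $d_1=d_1(t_2)$ of $g$ that depends only on $t_2\bmod g$, so by multiplicativity of $\alpha=\mu/\phi$ the summand is $\alpha(m')\,\alpha(s_2')\,\alpha(d_1(t_2))\,e(\tfrac{t_2}{s_2})$. Choosing $A,B$ with $As_2'+Bg=1$ and splitting $t_2$ by the Chinese remainder theorem into $u=t_2\bmod g$ and $u'=t_2\bmod s_2'$, one gets $e(\tfrac{t_2}{s_2})=e(\tfrac{uA}{g})e(\tfrac{u'B}{s_2'})$; the $u'$-sum then separates as the Ramanujan sum $\sum_{u'\in\zffmi{s_2'}}e(\tfrac{u'B}{s_2'})=\mu(s_2')$ (legitimate since $(B,s_2')=1$, by the argument behind Lemma \ref{indicator_mod_r}), and as $s_2'$ is squarefree $\alpha(s_2')\mu(s_2')=1/\phi(s_2')$. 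The remaining $u$-sum I would bound trivially by $\sum_{u\in\zffmi{g}}1/\phi(d_1(u))\le\tau(g)$ (for each $e\mid g$ the condition $e\mid d_1(u)$ pins $u$ down modulo $e$, leaving $\phi(g/e)$ values of $u$, so the sum is at most $\sum_{e\mid g}1$). Hence the inner sum has absolute value at most $\tau(g)/\phi(s_2')$.

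Substituting this, factoring $\alpha'(s_2)=\alpha'(g)\alpha'(s_2')$ (from (\ref{alpha_prime})) and using $\deg s_2'=\deg s_2-\deg g\ge Q-\deg g$, I would arrive at
\[
\big|\beta(\lambda)-\betatrunc(\lambda)\big|\le\sum_{g\mid m}\tau(g)\,\alpha'(g)\sum_{\substack{s_2'\mid P_R,\ (s_2',m)=1\\\deg s_2'\ge Q-\deg g}}\frac{\alpha'(s_2')}{\phi(s_2')}.
\]
Since $\alpha'(s_2')/\phi(s_2')=\prod_{r\mid s_2'}\frac{3}{(2^{\deg r}-1)(2^{\deg r}+3)}\le\prod_{r\mid s_2'}\frac{3}{4^{\deg r}}$, the generating function $\prod_r\big(1+3\cdot4^{-\deg r}y^{\deg r}\big)$ is analytic on $|y|<2$, so $\sum_{\deg s_2'\ge D}\alpha'(s_2')/\phi(s_2')\ll 2^{-D}D^{O(1)}$, and the inner sum is $\ll 2^{-(Q-\deg g)}N^{O(1)}$. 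Because $\tau(g)\,\alpha'(g)\,2^{\deg g}=\tau(g)\prod_{r\mid g}\frac{3\cdot2^{\deg r}}{2^{\deg r}+3}\le6^{\omega(g)}$ and $\sum_{g\mid m}6^{\omega(g)}=7^{\omega(m)}=2^{o(N)}$ (since $\omega(m)\ll\deg m/\log\deg m$ and $\deg m=O(N)$), this gives $|\beta(\lambda)-\betatrunc(\lambda)|\ll 2^{-Q}\cdot2^{o(N)}=2^{-N/8+o(N)}$, which is at most $\frac15 2^{-K}$ once $N$ is large in terms of $\eps$ — the decisive point being $Q=\tfrac{N}{8}>(\tfrac{1}{8}-\eps)N=K$.

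The hard part will be the middle step: establishing the exact shape $\denom(\lambda-\tfrac{t_2}{s_2})=m's_2'd_1$ with all the coprimalities in place, and cleanly extracting the Ramanujan sum. This is essential rather than cosmetic, since the trivial bound $\tau(g)$ on the leftover $u$-sum already discards the $1/\phi$ of its ``small'' part, so without the cancellation in the $u'$-sum the final estimate would diverge (the generating function $\prod_r(1+\frac{3}{2^{\deg r}+3}y^{\deg r})$ has a singularity at $y=1$). By contrast, the vanishing of the $\widetilde\Lambda_R-\Lambda_R$ contribution is a one-line degree count, and the closing estimate is a routine Euler-product computation.
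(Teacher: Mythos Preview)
Your argument is correct and follows a genuinely different route from the paper's. The paper expresses both $\beta$ and $\betatrunc$ as double sums over pairs $(q_1,q_2)$ and treats the full discrepancy (all terms with $\deg q_1\geqslant R$ or $\deg q_2\geqslant Q$) uniformly: its key Lemma~\ref{main_lemma_for_34} bounds the inner exponential sum $\sum_{a_1,a_2}e(a_1/q_1)$ by $\tau(r)$ via a Ramanujan-sum computation, and the outer sum over $(q_1,q_2)$ is then controlled using the divisibility constraints $q_1\mid rq_2$, $q_2\mid rq_1$ together with the divisor bound. You instead split $\widetilde\Lambda_R\widetilde H_R-\Lambda_R H_Q$ asymmetrically and observe that the piece $(\widetilde\Lambda_R-\Lambda_R)H_Q$ contributes nothing at $\lambda$, by the one-line degree count $\deg m+Q<R$; for the surviving piece you parametrise by $(s_2,t_2)$ alone (the frequency $t_1/s_1=\lambda-t_2/s_2$ being forced), separate off the Ramanujan sum $\mu(s_2')$ over the part of $s_2$ coprime to $m$, and bound the residual $u$-sum by $\tau(g)$. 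Both proofs ultimately rest on the same cancellation --- without the $1/\phi(s_2')$ saving the tail Euler product diverges, as you rightly flag --- but your decomposition isolates it more directly and sidesteps the combinatorics of the paper's lemma. In exchange, your argument makes essential use of the extra inequality $(1+\eps)K+Q<R$, whereas the paper's symmetric treatment only needs $K<Q$ and would survive a weaker choice of $R$. One small cosmetic point: the bound you state as $2^{-D}D^{O(1)}$ for the tail $\sum_{\deg s_2'\geqslant D}\alpha'(s_2')/\phi(s_2')$ is most cleanly obtained by Rankin's trick at $y=2(1-1/D)$ (or, equivalently, by noting $\alpha'(s)/\phi(s)\leqslant 6^{\omega(s)}4^{-\deg s}$ and invoking Lemma~\ref{original_A_1}); either way the endgame goes through exactly as you say.
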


\begin{proposition}
\label{original_3_6}
    The coefficients $\beta(\lambda)$ are real and non-negative with $\beta(0) \gg 1$.
\end{proposition}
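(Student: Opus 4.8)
The plan is to exploit the product structure of $\widetilde{\Lambda}_R$ and $\widetilde{H}_R$ and reduce the claim to a computation ``at each prime''. Write $q_r := 2^{\deg r}$ and set $g_r(f) := \Lambda_r(f+1)\,\tau^2_r(f)$, so that $\widetilde{\Lambda}_R(f+1)\,\widetilde{H}_R(f) = \prod_{r \in G_R} g_r(f)$ and each $g_r$ depends only on $f \bmod r$. Since $P_R := \prod_{r \in G_R} r$ is squarefree, the Chinese Remainder Theorem identifies $\mathbf{F}_2[x]/(P_R)$ with $\prod_{r \in G_R} \mathbf{F}_2[x]/(r)$, and the additive characters $f \mapsto e(\lambda f)$ of $\mathbf{F}_2[x]/(P_R)$ correspond bijectively, via partial fractions, to tuples $(t_r)_r$ with $\deg t_r < \deg r$ through $\lambda \equiv \sum_{r \mid P_R} t_r/r$. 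Expanding each $g_r$ in its finite Fourier series modulo $r$ and multiplying out therefore shows that $\beta$ is supported on fractions $u/s$ with $s \mid P_R$, and that for such a fraction taken in lowest terms, with partial-fraction decomposition $u/s \equiv \sum_{r \mid s} t_r/r$ (each $t_r$ a nonzero residue mod $r$),
\[
\beta\Big(\tfrac{u}{s}\Big) \;=\; \Big(\prod_{r \mid s} \widehat{g_r}(t_r)\Big)\cdot \prod_{\substack{r \in G_R \\ r \nmid s}} \widehat{g_r}(0),
\]
where $\widehat{g_r}(t)$ denotes the coefficient of $e(ft/r)$ in the Fourier expansion of $g_r$ modulo $r$; for every other $\lambda$ we have $\beta(\lambda) = 0$. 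It therefore suffices to show that each $\widehat{g_r}(t)$ is real and nonnegative and that $\prod_{r \in G_R}\widehat{g_r}(0) \gg 1$.

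For the local computation I would first rewrite the defining cases compactly as $\Lambda_r(f+1) = \frac{q_r}{q_r-1}\big(1 - 1_{r \mid f+1}\big)$ and $\tau^2_r(f) = \frac{q_r}{q_r+3}\big(1 + 3\cdot 1_{r \mid f}\big)$. Since $r$ is non-constant, $r\mid f$ and $r\mid f+1$ cannot both hold, so
\[
g_r(f) \;=\; \frac{q_r^2}{(q_r-1)(q_r+3)}\Big(1 + 3\cdot 1_{r\mid f} - 1_{r\mid f+1}\Big).
\]
Substituting $1_{r\mid h} = q_r^{-1}\sum_{g \in \mathbf{F}_2[x]/(r)} e(hg/r)$ (Lemma \ref{indicator_mod_r}), using additivity of $e$, and splitting off the $g=0$ term gives
\[
g_r(f) \;=\; \frac{q_r}{(q_r-1)(q_r+3)}\Big( (q_r+2) + \sum_{g \in \zffmodinv} \big(3 - e(g/r)\big)\, e(fg/r) \Big),
\]
so $\widehat{g_r}(0) = \frac{q_r(q_r+2)}{(q_r-1)(q_r+3)}$ and $\widehat{g_r}(t) = \frac{q_r(3 - e(t/r))}{(q_r-1)(q_r+3)}$ for $r \nmid t$. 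Because $e(t/r) \in \{+1,-1\}$, the quantity $3 - e(t/r) \in \{2,4\}$ is real and positive, and $q_r + 2 > 0$; hence every $\widehat{g_r}(t)$ is real and strictly positive.

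Feeding this back into the factorization, each $\beta(\lambda)$ is either $0$ or a product of real positive numbers, so $\beta$ is real and nonnegative. For $\beta(0) = \prod_{r \in G_R} \frac{q_r(q_r+2)}{(q_r-1)(q_r+3)}$, each factor equals $1 + \frac{3}{q_r^2 + 2q_r - 3} > 1$, so $\beta(0) \ge 1$, in particular $\beta(0) \gg 1$ (the product also stays bounded, since $\sum_r q_r^{-2}$ converges, though this is not needed). The only genuinely delicate point is the bookkeeping in the first step — making precise the Chinese Remainder Theorem/partial-fraction correspondence between the characters of $\mathbf{F}_2[x]/(P_R)$ and the reduced fractions with denominator dividing $P_R$, which is what expresses $\beta$ as a product of local coefficients; after that everything reduces to a direct application of Lemma \ref{indicator_mod_r}.
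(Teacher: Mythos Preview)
Your proof is correct and follows essentially the same approach as the paper: both arguments factor $\widetilde{\Lambda}_R(f+1)\widetilde{H}_R(f)$ over primes $r\in G_R$, compute the local Fourier coefficients of $\Lambda_r(f+1)\tau^2_r(f)$ (the paper does this via the normalized function $u_r$, you do it directly for $g_r$), observe that they are all positive since $3-e(t/r)\in\{2,4\}$, and then simplify $\beta(0)=\prod_r\frac{q_r(q_r+2)}{(q_r-1)(q_r+3)}=\prod_r\big(1+\tfrac{3}{q_r^2+2q_r-3}\big)\ge 1$. Your write-up is in fact slightly cleaner in that you keep the shift $f\mapsto f+1$ explicit throughout and spell out the CRT/partial-fraction justification for the product formula for $\beta$.
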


We note that $\Psi$ clearly satisfies property $(1)$ of Theorem \ref{original_2_1}. In Proposition \ref{original_3_1} we show that to prove that it satisfies $(2)$, it suffices to show that $\Psi'$ does. If $\Psi'$ fails to satisfy $(2)$ for some $\theta$, then by Proposition \ref{original_3_2} $\theta$ is on a 'major arc'. Propositions \ref{original_3_3}, \ref{original_3_4} and \ref{original_3_6} show that for such $\theta$, $\Psi'$ also satisfies $(2)$. Finally, since $\sum_{f \in G_N} \Psi'(f)=\sum_{f \in G_N} \Psi'(f) e(f \cdot 0)$ and $0$ is on the major arc, these propositions show that $\Psi'$ also satisfies $(3)$.

Before proving the above propositions in order (in Sections \ref{section_3_1} - \ref{section_3_6}), we will show a property of the Fourier coefficients of the considered functions, which will later be used multiple times.

\section{Bounds on Fourier coefficients}
\label{section_fourier_bounds}

In the previous section, we defined functions $\widetilde{\Lambda}_Q,$ $\widetilde{H}_Q$, and their Fourier-truncated variants. We noted that for $\alpha, \alpha'$ their respective Fourier coefficients, we have $|\alpha(\frac{r}{s})|, |\alpha'(\frac{r}{s})| \leqslant \frac{\tau(s)^2}{q^{\deg(s)}}$. In this section, we consider a product of two functions whose Fourier coefficients are bounded in this way, and bound its Fourier coefficients analogously. The bound developed in this section will be used later throughout the proof.

\begin{definition}
    Let us write a function $F: \zff \rightarrow \mathbf{R}$ in terms of its Fourier coefficients as: $$F(f)=\sum_{\lambda \in \mathbf{F}_2(x) / \zff} c(\lambda)e(\lambda f).$$ We say that $F \in \mathcal{C}_B(X)$ (for $B \in \mathbf{N}$) if we have $|c(\lambda)| \ll \frac{\tau(\denom(\lambda))^B}{2^{\deg(\denom(\lambda))}}$ for any $\lambda$, and moreover the coefficients $c$ are supported on $\lambda$ with $\mu(\denom(\lambda))^2=1$ and $\deg(\denom(\lambda)) \leqslant X$.
\end{definition}

\begin{lemma}
\label{original_4_2}
    If $F \in \mathcal{C}_{B_1}(X)$, $G \in \mathcal{C}_{B_2}(Y)$, then $H(f):=F(f)G(f) \in X^{O_{B_1, B_2}(1)} \mathcal{C}_{B_1+2B_2+3}(X+Y).$
\end{lemma}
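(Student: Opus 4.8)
The plan is to write out the product $H(f) = F(f)G(f)$ in terms of the Fourier coefficients of $F$ and $G$, and then to regroup the resulting double sum by the denominator of the combined frequency. Concretely, if $F(f) = \sum_{\lambda} c(\lambda) e(\lambda f)$ and $G(f) = \sum_{\mu} d(\mu) e(\mu f)$ with $c$ supported on squarefree denominators of degree $\leqslant X$ and $d$ on squarefree denominators of degree $\leqslant Y$, then $H(f) = \sum_{\lambda, \mu} c(\lambda) d(\mu) e((\lambda+\mu) f)$. Writing $\lambda = u/s$ in lowest terms and $\mu = v/t$ in lowest terms, the sum $\lambda + \mu$ has denominator dividing $\lcm(s,t) = st/(s,t)$; since $s$ and $t$ are both squarefree, $\lcm(s,t)$ is squarefree, and its degree is at most $\deg(s) + \deg(t) \leqslant X + Y$. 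This already gives the support conditions claimed for $H$ (squarefree denominator, degree $\leqslant X+Y$) — the only thing that can go wrong is that after cancellation the reduced denominator of $\lambda + \mu$ is a proper divisor of $\lcm(s,t)$, but that only helps, since a smaller denominator makes the target bound easier.

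The substance is the coefficient bound. Fix a frequency $\nu$ with reduced denominator $w$, $w$ squarefree, $\deg(w) \leqslant X+Y$. The coefficient of $e(\nu f)$ in $H$ is $\sum c(u/s) d(v/t)$ over all pairs $(u/s, v/t)$ in lowest terms with $u/s + v/t \equiv \nu \pmod{\zff}$. I would organize this sum by the pair of denominators $(s,t)$: for each such pair the number of residues $u$ with $u/s + v/t = \nu$ (for the appropriate $v$ determined by $u$, or vice versa) is controlled, and using $|c(u/s)| \leqslant C \tau(s)^{B_1}/2^{\deg s}$, $|d(v/t)| \leqslant C \tau(t)^{B_2}/2^{\deg t}$ the contribution of the pair $(s,t)$ is at most roughly $\tau(s)^{B_1} \tau(t)^{B_2} \cdot (\#\text{residues}) / 2^{\deg s + \deg t}$. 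The key arithmetic point is that if $u/s + v/t$ reduces to a fraction with denominator exactly $w$, then $\lcm(s,t)$ must be a multiple of $w$, and writing $s = s_1 s_2$, $t = t_1 t_2$ according to the common part, one finds $\deg s + \deg t \geqslant \deg w + \deg(s,t)$ with a further gain from the part of $s$ and $t$ not appearing in $w$. Summing $\tau(s)^{B_1}\tau(t)^{B_2}/2^{\deg s + \deg t}$ over all $(s,t)$ with $\lcm(s,t) = w$ (or a multiple of $w$) produces a factor $\tau(w)^{O(B_1+B_2)}$ from the divisor-counting, times $2^{-\deg w}$, times a convergent-type sum over the "free" parts of $s$ and $t$ bounded by $X^{O_{B_1,B_2}(1)}$ (using that there are only $O(X)$-many primes in play and crude bounds on $\sum_{\deg e \le X} \tau(e)^{O(1)} 2^{-\deg e}$). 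Tracking the exponents carefully — each of the $B_2$ powers of $\tau(t)$ can split as $\tau$ of the part dividing $w$ times $\tau$ of the complementary part, and likewise we pick up the number-of-pairs factor which is itself $\tau(w)$-like — yields the exponent $B_1 + 2B_2 + 3$ in the final bound $|c_H(\nu)| \ll X^{O_{B_1,B_2}(1)} \tau(w)^{B_1 + 2B_2 + 3} / 2^{\deg w}$, which is exactly the assertion $H \in X^{O_{B_1,B_2}(1)} \mathcal{C}_{B_1+2B_2+3}(X+Y)$.

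The main obstacle I anticipate is purely bookkeeping: getting the exponent on $\tau(w)$ down to exactly $B_1 + 2B_2 + 3$ rather than something larger. This forces a careful choice of which variable to "solve for" when counting residues (solving for the numerator attached to the larger-power side, $G$, so that the count of residues is absorbed against $\tau(t)$ rather than $\tau(s)$), and a careful multiplicativity argument splitting $s$, $t$, and $w$ into their pairwise-gcd components so that each $\tau$-factor lands on the right piece. The asymmetry $B_1 + 2B_2$ (versus a symmetric $2B_1 + 2B_2$) should come precisely from the fact that one of the two numerators is free and the other is then determined, so only one of $F$, $G$ contributes a "number of choices" factor. Everything else — the squarefreeness and degree bounds on $\lcm$, the convergence of the auxiliary sums, and the $X^{O(1)}$ slack — is routine once this combinatorial skeleton is set up.
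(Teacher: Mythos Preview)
Your approach matches the paper's: express the coefficient of $H$ at a frequency with reduced denominator $g$ as a double sum over denominator pairs $(g_1,g_2)$, bound the number of numerators $f_1$ with $\denom(f/g - f_1/g_1) = g_2$ by an explicit gcd computation (the paper obtains $\leqslant 2^{\deg(g_2(g_1,g)/g)}$), use $g_2 \mid gg_1$ together with $\tau(g_2)\leqslant \tau(g)\tau(g_1)$ to reduce to a single sum $\sum_{g_1 \in G_{X+1}} \tau(g_1)^{B_1+B_2+1} 2^{\deg((g_1,g))-\deg g_1}$, and finish that sum by an Euler-product estimate yielding $\tau(g)^{\leqslant B_1+B_2+2}\cdot X^{O_{B_1,B_2}(1)}$. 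The exponent $B_1+2B_2+3$ then arises as $(B_2+1)+(B_1+B_2+2)$, exactly the mechanism you describe.
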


\begin{proof}
    Let $a, b, c$ be the Fourier coefficients of respectively $F, G, H$. The fact that $c(\lambda)$ is supported on $\lambda$ squarefree with $\deg(\denom(\lambda)) \leqslant X + Y$ follows directly from multiplying the two Fourier series out, as the degree of the denominator of $\frac{f_1}{g_1}+\frac{f_2}{g_2}$ is at most $\deg(g_1g_2)=\deg(g_1)+\deg(g_2)$. Hence, we have only the second claim left to prove.

    We have 
    \begin{multline*}
        c\Big(\frac{f}{g}\Big)=\sum_{\substack{g_1 \in G_{X+1},\\ g_2 \in G_{Y+1}}} \sum_{\substack{f_i \in \zff / (g_i): \\ \frac{f_1}{g_1}+\frac{f_2}{g_2} = \frac{f}{g}}} a\Big(\frac{f_1}{g_1}\Big)b\Big(\frac{f_2}{g_2}\Big) \\
        \ll \sum_{\substack{g_1 \in G_{X+1}, \\ \mu(g_1)^2=1}} \frac{\tau(g_1)^{B_1}}{2^{\deg(g_1)}} \sum_{\substack{g_2 \in G_{Y+1}, \ g_2 | gg_1, \\ \mu(g_2)^2=1}} \frac{\tau(g_2)^{B_2}}{2^{\deg(g_2)}} \#\{f_1 \in \zff / (g_1) : \denom\Big(\frac{f}{g}-\frac{f_1}{g_1}\Big)=g_2\}.
    \end{multline*}

    Now we focus on bounding the inner $\#\{f_1 \in \zff / (g_1) : \denom(\frac{f}{g}-\frac{f_1}{g_1})=g_2\}$ (we note that it is $0$ if $g_2 \nmid gg_1$).

    The condition that $\denom(\frac{a}{q}+\frac{b}{r})=s$ is equivalent to $(ar+bq, rq)=\frac{rq}{s}$. Let $(q, r)=d$, $q=dq'$ and $r=dr'$ (for $q' \perp r'$, $d \perp q', r'$ since $q, r$ are squarefree). Hence, $\denom(\frac{a}{q}+\frac{b}{r})=s$ implies $(ar'+bq', q'r'd)=\frac{q'r'd}{s}$. Since $a \perp q', b \perp r'$, we have $(ar'+bq', q'r'd)=(ar'+bq', d)$ and so we must have $q'r'|s$ (so let $s=q'r't$). Hence, $(ar'+bq', d)=\frac{d}{t}$ and so we need $\frac{d}{t} | (ar'+bq')$, which holds if and only if $ar' \equiv bq' (\modd \ \frac{d}{t})$, so $a \equiv bq'(r')^{-1} (\modd \ \frac{d}{t})$ (as $r' \perp d$). Hence, there are at most $2^{\deg(\frac{tq}{d})}=2^{\deg(\frac{s \cdot (q, r)}{r})}$ possible choices of $a \in \zffmi{q}$ satisfying the condition $\denom(\frac{a}{q}+\frac{b}{r})=s$.

    Now we return to bounding $c(\frac{f}{g})$. From the above considerations, we have
    \begin{multline*}
        c\Big(\frac{f}{g}\Big)
    \leqslant \sum_{\substack{g_1 \in G_{X+1}, \\ \mu(g_1)^2=1}} \frac{\tau(g_1)^{B_1}}{2^{\deg(g_1)}} \sum_{\substack{g_2 \in G_{Y+1}, \ g_2 | gg_1, \\ \mu(g_2)^2=1}} \frac{\tau(g_2)^{B_2}}{2^{\deg( g_2 )}} \cdot 2^{\deg(g_2 \cdot (g_1, g)) - \deg( g )}
    \\ \leqslant \frac{\tau(g)^{B_2+1}}{2^{\deg( g )}} \sum_{\substack{g_1 \in G_{X+1}, \\ \mu(g_1)^2=1}} \tau(g_1)^{B_1+B_2+1} \cdot 2^{\deg( (g_1, g) ) - \deg( g_1 )},
    \end{multline*}
    where the second inequality comes from bounding $\tau(g_2) \leqslant \tau(g)\tau(g_1)$ and bounding the number of occurrences of $g_2$ in the inner sum by $\tau(gg_1) \leqslant \tau(g)\tau(g_1)$.
    Hence, it suffices to show that (for $B=B_1+B_2+1$)
    $$\sum_{\substack{h \in G_{X+1}, \\ \mu(h)^2=1}} \tau(h)^{B} \cdot 2^{\deg( (h, g) ) - \deg(h)} \leqslant \tau(g)^{O_B(1)} \cdot X^{O(1)}.$$
    Since $g$ is squarefree, we note that the above is at most
    $$\prod_{\substack{r \in G_{X+1}, \\ r \nmid g, \ r \pprime}} \Big( 1 + \frac{2^B}{2^{\deg( r )}} \Big) \cdot \prod_{\substack{r \in G_{X+1}, \\ r \mid g, \ r \pprime}} ( 1 + 2^B) = \tau(g)^{\log_2 (1 + 2^B)} \cdot \prod_{\substack{r \in G_{X+1}, \\ r \nmid g, \ r \pprime}} \Big( 1 + \frac{2^B}{2^{\deg( r )}} \Big).$$
    From the prime number theorem for polynomials over finite fields (its proof can be found e.g. in \cite{pnt_finite_fields}) we have
    \begin{multline*}
        \prod_{\substack{r \in G_{X+1}, \\ r \pprime}} \Big( 1 + \frac{2^B}{2^{\deg( r )}} \Big)
        \leqslant \prod_{n=1}^X \Big( 1 + \frac{2^B}{2^n} \Big)^{2 \cdot 2^n/n}
        \leqslant \prod_{n=1}^X \bigg( \sum_{k=0}^{\infty} \frac{2^{B \cdot k}}{2^{kn}} \binom{2^{n+1}/n}{k} \bigg) \\
        \ll \prod_{n=2^{B+1}+1}^X \bigg( \sum_{k=0}^{\infty} \frac{2^{B \cdot k}}{2^{kn}} \cdot \frac{2^{k(n+1)}}{n^k} \bigg)
        = \prod_{n=2^{B+1}+1}^X \frac{n}{n-2^{B+1}} \leqslant \frac{X!}{(X-2^{B+1})!}\leqslant X^{2^{B+1}}=X^{O_B(1)},
    \end{multline*}
    which finishes the proof.
\end{proof}

We note that the Fourier coefficients $\alpha, \alpha'$ of $\Lambda_Q(f), H_Q(f)$ satisfy $|\alpha(\lambda)|, |\alpha'(\lambda)| \leqslant \frac{\tau(\denom(\lambda))^2}{2^{\deg( \denom(\lambda) )}}$.

Since $H_Q(f) \in \mathcal{C}_2(Q)$ and $\Lambda_R(f+1) \in \mathcal{C}_2(R)$, Lemma \ref{original_4_2} allows us to deduce that $\Lambda_R(f+1)H_Q(f) \in N^{O(1)} \mathcal{C}_9(R+Q)$.

\section{Proof of Proposition \ref{original_3_1}}
\label{section_3_1}
In this and the remaining sections we prove Propositions \ref{original_3_1}-\ref{original_3_6}, therefore finishing the proof of Theorem \ref{original_2_1}, and so Theorem \ref{original_1_1} (the results from the previous will be useful multiple times in the process).

Proposition \ref{original_3_1} states that for any $\theta \in \mathbf{T}$ (where $\mathbf{T}$ is as defined in (\ref{t_circle})), we have $$\bigg| \sum_{f \in G_N} \big( \Psi(f) - \Psi'(f) \big) e(f\theta) \bigg| \leqslant \frac{1}{3}2^{N-K}$$
(for $\Psi, \Psi'$ defined in \ref{psi_def} and \ref{psi_prime_def} and $K=(\frac{1}{8}-\eps)N$). This allows us to work in the rest of the proof with $\Psi'$ instead of $\Psi$.

First, we will show that it suffices to prove that $\Lambda'(f)$ has Fourier spectrum close to that of $\Lambda_R(f)$.

\begin{lemma}
    If for some $\theta$ and $0<c<1$ we have
    $$\bigg| \sum_{f \in G_N} \big( \Lambda'(f) - \Lambda_R(f) \big) e(f \theta) \bigg| \lessapprox 2^{cN},$$ then $$\bigg| \sum_{f \in G_N} \big( \Psi(f) - \Psi'(f) \big) e(f \theta)\bigg| \lessapprox 2^{cN+Q}.$$
\end{lemma}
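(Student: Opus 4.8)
The plan is to pass from $\Psi-\Psi'$ to the hypothesis by expanding $H_Q$ into its Fourier series. On $G_N$ we have $\Psi(f)-\Psi'(f)=\big(\Lambda'(f+1)-\Lambda_R(f+1)\big)H_Q(f)$, so, writing $S(\psi):=\sum_{f\in G_N}\big(\Lambda'(f+1)-\Lambda_R(f+1)\big)e(f\psi)$ and substituting the expansion (\ref{h_def}) of $H_Q$ while collapsing $e(ft/s)\,e(f\theta)=e\big(f(\theta+t/s)\big)$, one gets
\begin{equation*}
\sum_{f \in G_N}\big(\Psi(f)-\Psi'(f)\big)e(f\theta)=\sum_{s \in G_Q}\alpha'(s)\sum_{t \in \zffmodinvs} S\Big(\theta+\frac{t}{s}\Big).
\end{equation*}
So it suffices to bound each $S(\theta+t/s)$ using the hypothesis and then sum the result against $|\alpha'(s)|$ over $s\in G_Q$ and $t\in\zffmodinvs$.

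For the first half I would make the substitution $g=f+1$, which is an involution of $G_N$ since we are in characteristic $2$, and use $e(\alpha+\beta)=e(\alpha)e(\beta)$ — immediate from the definition of $e$, as the $x^{-1}$-coefficient is additive over $\mathbf{F}_2$ — together with $|e(\,\cdot\,)|=1$. This yields
\begin{equation*}
\big|S(\psi)\big|=\Big|\sum_{g \in G_N}\big(\Lambda'(g)-\Lambda_R(g)\big)e(g\psi)\Big|,
\end{equation*}
which is exactly the quantity the hypothesis controls, so $|S(\theta+t/s)|\lessapprox 2^{cN}$ for every $s\in G_Q$, $t\in\zffmodinvs$. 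One small point: the hypothesis gets used not only at $\theta$ but at the finitely many translates $\theta+t/s$; this is harmless, since in the intended application Proposition \ref{original_3_1} will be derived from a bound on $\big|\sum_f(\Lambda'(f)-\Lambda_R(f))e(f\alpha)\big|$ valid for all $\alpha\in\mathbf{T}$.

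It remains to estimate the combinatorial sum. By the triangle inequality, using $\#\zffmodinvs=\phi(s)$ and the bound just obtained,
\begin{equation*}
\Big|\sum_{f \in G_N}\big(\Psi(f)-\Psi'(f)\big)e(f\theta)\Big|\ \lessapprox\ 2^{cN}\sum_{s \in G_Q}|\alpha'(s)|\,\phi(s).
\end{equation*}
Since $\alpha'$ is supported on squarefree $s$ with $|\alpha'(s)|\leqslant\tau(s)^2/2^{\deg(s)}$ and $\phi(s)\leqslant 2^{\deg(s)}$, each summand is at most $\tau(s)^2$, and a standard divisor-moment bound gives $\sum_{s\in G_Q}\tau(s)^2\ll Q^{O(1)}2^Q\lessapprox 2^Q$ (the generating function $\sum_s\tau(s)^2u^{\deg(s)}=\frac{1-2u^2}{(1-2u)^4}$ has a pole of order $4$ at $u=1/2$, so $\sum_{\deg(s)=n}\tau(s)^2\asymp 2^nn^3$). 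Combining, $\big|\sum_{f\in G_N}(\Psi(f)-\Psi'(f))e(f\theta)\big|\lessapprox 2^{cN+Q}$.

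The argument is largely bookkeeping; the one genuinely quantitative input is the divisor-moment estimate, and it must be taken with the sharp power of $Q$, since a bound of shape $2^{(1+\eps)Q}$ would not be absorbed by $\lessapprox$ (recall $Q\asymp N$). The only other thing to watch is the shift of frequency from $\theta$ to $\theta+t/s$, already addressed above.
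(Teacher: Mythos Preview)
Your proof is correct and follows essentially the same route as the paper: expand $H_Q$ as a Fourier series, pull the coefficients outside, apply the hypothesis at the shifted frequencies $\theta+t/s$, and finish with the divisor-moment bound $\sum_{s\in G_Q}\tau(s)^2\lessapprox 2^Q$ (the paper cites Lemma~\ref{original_A_1} for this, which is the same estimate as your generating-function remark). If anything you are more careful than the paper: you make explicit the involution $f\mapsto f+1$ on $G_N$ needed to match the hypothesis, and you flag that the hypothesis is actually invoked at all translates $\theta+t/s$ rather than at $\theta$ alone---a point the paper glosses over but which is indeed harmless since the input bound is established uniformly in $\theta$.
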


\begin{proof}
    Let
    \begin{equation}
        H_Q(f):=\sum_{\substack{\lambda \in \mathbf{T}, \\ \ord(\denom(\lambda)) \leqslant Q}} h(\lambda)e(\lambda f),
    \end{equation}
    where $h(\lambda)$ is supported on $\lambda$ squarefree. We also recall that $h(\lambda) \leqslant \frac{\tau(\denom(\lambda))^2}{2^{\deg( \denom(\lambda)) }}$. Then we have
    \begin{multline*}
        \bigg| \sum_{f \in G_N} \big(\Psi(f) - \Psi'(f)\big)e(f \theta) \bigg| = \bigg|  \sum_{f \in G_N} \big(\Lambda'(f+1)- \Lambda_R(f+1)\big)H_Q(f)e(f \theta) \bigg|\\ \leqslant \sum_{\substack{\lambda \in \mathbf{T}, \\ \ord(\denom(\lambda)) \leqslant Q}}\big|  h(\lambda) \big| \cdot \bigg| \sum_{f \in G_N} \big(\Lambda'(f+1) - \Lambda_R(f+1) \big) e((\theta + \lambda)f) \bigg|\\ \lessapprox 2^{cN} \sum_{\substack{\lambda \in \mathbf{T}, \\ \ord(\denom(\lambda)) \leqslant Q}} \big|h(\lambda)\big|\leqslant 2^{cN}\sum_{s \in G_Q} \frac{\tau(s)^2}{2^{\deg( s )}} \cdot \phi(s) \leqslant 2^{cN}\sum_{s \in G_Q}\tau(s)^2 \lessapprox 2^{cN+Q},
    \end{multline*}
    where the last inequality holds by Lemma \ref{original_A_1}.
\end{proof}

Let us recall that $K=\big(\frac{1}{8}-\eps\big)N$, $R=\frac{N}{4}$ and $Q=\frac{N}{8}$.
As $(N-2Q)+Q=N-K-\eps N$, to prove Proposition \ref{original_3_1}, it suffices to show that for any $\theta \in \mathbf{T}$, we have $$\bigg| \sum_{f \in G_N} \big( \Lambda'(f) - \Lambda_R(f) \big) e(f \theta) \bigg| \lessapprox 2^{N-2Q}.$$
Let us denote $S_1:=\sum_{f \in G_N} \Lambda'(f) e(f \theta)$ and $S_2 :=\sum_{f \in G_N} \Lambda_R(f) e(f \theta)$. We need to show that $\big| S_1 - S_2 \big| \leq 2^{N-2Q}$.

From Dirichlet approximation for polynomials (Lemma \ref{dirichlet}), there exist $u, s \in \mathbf{F}_2[x], \eta \in \mathbf{T}$ such that
\begin{equation}
    \theta=-\frac{u}{s}+\eta, \ \ \deg(s)<\frac{N}{2}-Q, \ \ \ord(\eta)\leqslant -\deg(s)-\frac{N}{2}+Q
\end{equation}
(it will soon be clear why $\frac{N}{2}-Q$ was chosen to bound the degree of $s$).

First, to estimate $S_1$, we quote the following lemma (its proof can be found in \cite[Theorem 5.3, Lemma 7.1]{hayes})

\begin{lemma}
\label{grhlemma}
    For $k \in \mathbf{N}$ and $\theta$ such that $\theta=\frac{u'}{s'}+\eta'$ for $\deg(s') \leqslant k/2$, $u' \in \mathbf{F}_2[x]$ and $\ord(\eta') < -\ord(s')- \lfloor k/2 \rfloor$, we have $$\bigg| \sum_{\substack{f: \ \deg(f)=k}} \Lambda'(f) e(f \theta) - \frac{\mu(s')}{\phi(s')} 2^k \cdot c(k, \eta') \bigg| < k2^{(3k+5)/4}$$ where $$c(k, \eta'):=\begin{cases}
        1 & \text{if } \ord(\eta')<-k-1\\
        (-1) & \text{if } \ord(\eta')=-k-1\\
        0 & \text{otherwise}
    \end{cases}.$$
\end{lemma}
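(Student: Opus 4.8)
The plan is to read this off from the distribution of prime polynomials of degree $k$ across a residue class modulo $s'$ together with a prescribed set of top coefficients (a ``short interval''), via Dirichlet characters and the Riemann Hypothesis for curves over $\mathbf{F}_2$ (Weil), which keeps the argument unconditional.

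First I would reduce to a twisted sum over primes. Since $\deg s'\le k/2<k$, every prime $f$ with $\deg f=k$ is coprime to $s'$ and has $\Lambda'(f)=k$, so --- writing $\theta=u'/s'+\eta'$ and assuming, as the statement intends, that $(u',s')=1$ ---
\[
 \sum_{\deg f=k}\Lambda'(f)e(f\theta)=k\sum_{\substack{\deg f=k\\ f\ \pprime}}e\!\Big(\tfrac{fu'}{s'}\Big)\,e(f\eta').
\]
Here $f\mapsto e(fu'/s')$ is an additive character $\psi$ of $\mathbf{F}_2[x]/(s')$. For $e(f\eta')$: the coefficient of $x^{-1}$ in $f\eta'$ is $\sum_i f_i(\eta')_{-1-i}$, and since $\ord(\eta')\le -\deg s'-\lfloor k/2\rfloor-1$ this involves only the $f_i$ with $i\ge \deg s'+\lfloor k/2\rfloor$; consequently, on $\{\deg f=k\}$ the function $e(f\eta')$ is identically $1$ when $\ord(\eta')<-k-1$, identically $-1$ when $\ord(\eta')=-k-1$ (forced by $f_k=1$), and otherwise a nontrivial additive character of the $\ell:=k-\deg s'-\lfloor k/2\rfloor$ coefficients $f_{\deg s'+\lfloor k/2\rfloor},\dots,f_{k-1}$. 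These three cases are precisely $c(k,\eta')=1,-1,0$.

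Next I would pass to multiplicative characters so the prime-counting machinery applies: expand $\psi$ over the Dirichlet characters $\chi$ modulo $s'$ (with coefficients $\widehat\psi(\chi)$), and expand $f\mapsto e(f\eta')$ over the multiplicative ``characters at infinity'' $\chi_\infty$ (Hayes's generalized Dirichlet characters of a modulus $s'\cdot\infty^{\ell+O(1)}$). The sum becomes a linear combination, over pairs $(\chi,\chi_\infty)$, of the twisted prime sums $\sum_{\deg f=k}\Lambda'(f)(\chi\chi_\infty)(f)$, the coefficient of a pair being $\widehat\psi(\chi)$ times the $\chi_\infty$-coefficient of $e(\cdot\eta')$; by Parseval and Cauchy--Schwarz these coefficients have absolute values summing to at most $\sqrt{\#\{(\chi,\chi_\infty)\}}\le 2^{(\deg s'+\ell)/2+O(1)}=2^{\lceil k/2\rceil/2+O(1)}\le 2^{k/4+O(1)}$. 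Each twisted prime sum is the coefficient of $u^k$ in $u\frac{d}{du}\log L(u,\chi\chi_\infty)$, up to a $\ll k2^{k/2}$ error from prime powers (omitted by $\Lambda'$). For the principal pair $(\chi_0,1)$ one has $L(u,\chi_0)=\zeta(u)\prod_{\substack{r\mid s'\\ r\ \pprime}}(1-u^{\deg r})$ with $\zeta(u)=(1-2u)^{-1}$, so the pole at $u=1/2$ produces the main term $2^k$, weighted by $\widehat\psi(\chi_0)$ times the trivial-$\chi_\infty$ coefficient; the former is the Ramanujan sum $\tfrac{1}{\phi(s')}\sum_a e(au'/s')=\tfrac{\mu(s')}{\phi(s')}$ (using $(u',s')=1$) and the latter is $c(k,\eta')$ by the case analysis above, giving exactly $\tfrac{\mu(s')}{\phi(s')}2^k c(k,\eta')$. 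For every other pair, $\chi\chi_\infty$ is a nontrivial generalized character, so $L(u,\chi\chi_\infty)$ is a polynomial of degree $\le \deg s'+\ell+O(1)=\lceil k/2\rceil+O(1)$ whose inverse roots all have absolute value $\le 2^{1/2}$ by Weil's theorem (an imprimitive one contributing only $O(\deg s')$ more inverse roots, of modulus $\le 1$), so its twisted prime sum is $\ll k2^{k/2}$. Multiplying the $2^{k/4+O(1)}$ total coefficient weight by this per-pair bound $\ll k2^{k/2}$ leaves an error $\ll k2^{3k/4}$, comfortably smaller than $k2^{(3k+5)/4}$.

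The hard part is not the analytic input --- Weil's theorem is unconditional --- but the handling of the ``characters at infinity'': one must organise the short-interval characters so that prescribing the top coefficients of $f$ costs a conductor of degree about $\ell$, and, crucially, notice that the hypothesis $\ord(\eta')<-\deg s'-\lfloor k/2\rfloor$ makes $\deg s'+\ell=\lceil k/2\rceil$, which simultaneously caps the number of relevant characters (by $\approx 2^{\lceil k/2\rceil}$) and their conductor degrees (by $\approx\lceil k/2\rceil$); it is exactly this anti-correlation between $\deg s'$ and $\ell$ that yields the exponent $3/4$ rather than $1$. One also has to check that the Weil bound survives for imprimitive generalized characters. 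These points are worked out in \cite[Theorem 5.3, Lemma 7.1]{hayes}, which I would accordingly just cite rather than reprove.
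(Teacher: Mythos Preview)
Your proposal is correct and aligns with the paper, which does not prove this lemma at all but simply quotes it from \cite[Theorem~5.3, Lemma~7.1]{hayes}. Your sketch accurately reconstructs the Hayes argument (expansion into generalized Dirichlet characters with a conductor at infinity, Weil bounds, and the key observation that $\deg s'+\ell=\lceil k/2\rceil$ caps both the character count and the conductor degree), and you end by citing the very same reference, so the two treatments coincide.
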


Then we note that
$$S_1=\sum_{f \in G_{N-2Q-1}}\Lambda'(f)e(f\theta)+\sum_{n=N-2Q}^{N-1}\sum_{f: \ \deg(f)=n}\Lambda'(f)e(f\theta).$$ The first term is $\lessapprox 2^{N-2Q}$ and the second term is, by Lemma \ref{grhlemma}, within $O(1)\cdot N^22^{3N/4} \lessapprox 2^{N-2Q}$ from $$\frac{\mu(s)}{\phi(s)}\sum_{n=N-Q-K}^{\min(N, -\ord(\theta))-1}2^nc(n, \eta).$$ We note in passing that applying Lemma \ref{grhlemma} to $n\geqslant N-2Q$ is the reason for earlier bounding $\deg(s)$ by $\frac{N}{2}-Q$. Hence, we have arrived at the estimation
\begin{equation}
\label{equation_1}
    \bigg| \sum_{f \in G_N}\Lambda'(f)e(f\theta) - \frac{\mu(s)}{\phi(s)} \sum_{n < \min(N, -\ord(\eta))}2^n c(n, \eta) \bigg| \lessapprox 2^{N-2Q}.
\end{equation}
We note that
\begin{equation}
\label{equation_2}
    \sum_{n<\min(N, -\ord(\eta))}2^nc(n, \eta)=\begin{cases}
    2^N-1 & \text{if } \ord(\eta) < -N\\
    -1 & \text{otherwise}
\end{cases}.
\end{equation}

To estimate $S_2$, we note that since $$\Lambda_R(f)=\sum_{s \in G_R} \frac{\mu(s)}{\phi(s)} \sum_{u \in \zffmodinvs} e\Big(\frac{uf}{s}\Big),$$ we have $$\sum_{f \in G_N} \Lambda_R(f) e(f \theta)
= 1_{\deg(s) < R} \frac{\mu(s)}{\phi(s)} \sum_{f \in G_N} e(f \eta) + \sum_{t \in G_R} \sum_{\substack{v \in \zffmodinvt, \\ \frac{v}{t} \neq \frac{u}{s}}} \frac{\mu(t)}{\phi(t)} \sum_{f \in G_N} e\Big( \big(\frac{v}{t} - \frac{u}{s} + \eta \big)f \Big).$$
For $\frac{v}{t}\neq \frac{u}{s}$, $\ord(\frac{v}{t}-\frac{u}{s}) \geqslant \ord(s)-\ord(t) \geqslant -R - \ord(s).$ Since $\ord(\eta) \leqslant -\ord(s) - (\frac{N}{2}-Q) < -R - \ord(s)$, so $\ord(\frac{v}{t} - \frac{u}{s} + \eta) \geqslant -R-\ord(s) > -R-\frac{N}{2}+Q>-N$. Hence, by Lemma \ref{exponnents_monic}
$$\sum_{f \in G_N}e\Big( \big(\frac{v}{t} - \frac{u}{s} + \eta \big)f \Big)=0,$$
and so $$S_2=\frac{\mu(s)}{\phi(s)}\cdot 2^N \cdot 1_{\ord(\eta)<-N} \cdot 1_{\deg(s)<R}.$$

From this and the earlier estimations (\ref{equation_1}) and (\ref{equation_2}) of $S_1$, we can easily deduce the desired bound: if $\deg(s)\geqslant R$, then both $S_1$ and $S_2$ are at most $\frac{2^{N}}{\phi(s)} + 2^{N-2Q}\lessapprox 2^{N-\deg(s)} + 2^{N-2Q} \leqslant 2^{N-R}$. Otherwise, if $\deg(s)<R$, the estimations of $S_1$ and $S_2$ differ by $\frac{\mu(s)}{\phi(s)}$, which is negligible.

\section{Proof of Proposition \ref{original_3_2}}
\label{section_3_2}
Let us recall that Proposition \ref{original_3_2} states that if for some $\theta$ we have
\begin{equation}
\label{assumption_3_2}
    \bigg| \sum_{f \in G_N} \Lambda_R(f+1)H_Q(f)  e(f\theta) \bigg| \geqslant \frac{1}{3}2^{N-K}
\end{equation}
then $\theta$ is on a 'major arc', that is, there exists a polynomial $s$ of degree at most $(1+\eps)K+O_{\eps}(\log N)$ and a polynomial $u$ such that $\ord(\theta - \frac{u}{s}) < -N$. The functions $\Lambda_R, H_Q$ are defined respectively in \ref{lambda_def}, \ref{h_def} and we have $K=\big(\frac{1}{8}-\eps\big)N$, $R=\frac{N}{4}$, $Q=\frac{N}{8}$.

Moreover, let us recall from considerations at the end of Section \ref{section_fourier_bounds} that $\betatrunc$ (the Fourier coefficients of $\Lambda_R(f+1)H_Q(f)$) are supported on those $\lambda$ with $\denom(\lambda)$ squarefree and of degree at most $R+Q$, and are bounded by $|\betatrunc(\lambda)| \ll N^{O(1)} \frac{\tau(\denom(\lambda))^9}{2^{\deg(\denom(\lambda))}}$.

By Dirichlet's approximation theorem for polynomials (Lemma \ref{dirichlet}), we can write
\begin{equation}
    \theta=-\frac{u}{s}+\eta, \ \ \deg(s) < R+Q+1, \ \ \ord(\eta) \leqslant -R-Q-1-\deg(s)
\end{equation}
The assumption (\ref{assumption_3_2}) and the triangle inequality imply that
$$N^{O(1)}\sum_{\substack{t \in G_{R+Q},\\ \mu(t)^2=1}} \frac{\tau(t)^9}{2^{\deg( t )}} \sum_{v \in \zffmodinvt} \bigg| \sum_{f \in G_N} e \Big(f\big(\frac{v}{t}-\frac{u}{s}+\eta\big)\Big) \bigg| \geqslant \frac{1}{3}2^{N-K}.$$
For $\frac{v}{t} \neq \frac{u}{s}$, we have $\ord(\frac{v}{t}-\frac{u}{s})\geqslant -\deg(t)-\deg(s)\geqslant-R-Q-\deg(s)$. Since $\ord(\eta)\leqslant -R-Q-1-\deg(s)$, $\ord(\frac{v}{t}-\frac{u}{s}+\eta) \geqslant -R-Q-\deg(s)>-2R-2Q-1>-N$. Hence, by Lemma \ref{exponnents_monic}, the contribution from any term with $\frac{v}{t} \neq \frac{u}{s}$ is $0$, and so the only non-zero contribution comes from $\frac{v}{t}=\frac{u}{s}$. Hence, we have $$N^{O(1)} \cdot\frac{\tau(s)^9 }{2^{\deg( s )}} \bigg| \sum_{f \in G_N} e(f\eta) \bigg| \geqslant \frac{1}{3}2^{N-K}.$$
This in turn implies that $\ord(\eta)<-N$ and that
$2^{\deg(s)}/\tau(s)^9 \leqslant 3 \cdot 2^{K+O(\log N)}$. From the divisor bound for polynomials (Lemma \ref{divisor_bound}), $\tau(s) \ll_{\eps'} 2^{\deg(s)\eps'}$ for any $\eps' > 0$, so $2^{(1-9\eps')\deg(s)}\ll_{\eps'} 2^{K+O(\log N)}$, which implies that $\deg(s) \ll_{\eps} K(1+\eps)+O_{\eps}(\log N)$ if we take $\eps'=\frac{\eps}{9(1+\eps)}$.

\section{Proof of Proposition \ref{original_3_3}}
\label{section_3_3}
Proposition \ref{original_3_3} states that for $\theta$ on a 'major arc' (that is, close to a rational function), $\Psi'(\theta)$ is proportional to the Fourier coefficient of $\Psi'$ for $\theta$. In particular, we need to show that for $\theta=-\frac{u}{s}+\eta$, where $\deg(s) \leqslant K(1+\eps)+O_{\eps}(\log N)$, $u \in \zff$, $\ord(\eta) < -N$ (the minus sign was added for convenience) we have
$$\sum_{f \in G_N} \Psi'(f) e(f\theta) = \beta^{\trunc}(\frac{u}{s})\cdot 2^N$$
(where $\beta^{\trunc}$ was defined in \ref{beta_trunc_def}).
However,
$$\sum_{f \in G_N} \Psi'(f)e(f\theta)= \sum_{\lambda \in \mathbf{T}} \betatrunc(\lambda)\sum_{f \in G_N} e\Big( \big(\lambda-\frac{u}{s}+\eta\big)f\Big).$$
As in the proof of Proposition \ref{original_3_2}, the terms with $\lambda\neq \frac{u}{s}$ are $0$, so the above equals the contribution of $\lambda=\frac{u}{s}$, which is $\betatrunc(\frac{u}{s}) \cdot 2^N$.

\section{Proof of Proposition \ref{original_3_4}}
\label{section_3_4}
Proposition \ref{original_3_4} states that the Fourier coefficients of $\Psi$ and $\Psi'$ are close for arguments with denominators of a small degree. In particular, we need to show that if $\deg (\denom(\lambda)) \leqslant (1+\eps)K+O_{\eps}(\log N)$, then $$\Big| \betatrunc(\lambda) - \beta(\lambda) \Big| \leqslant \frac{1}{5}2^{-K},$$
where $\beta^{\trunc}$ and $\beta$ were defined respectively in \ref{beta_trunc_def} and \ref{beta_def}.

Let us start the proof by recalling what we know about the Fourier coefficients $\beta$ and $\betatrunc$.
First, both $\beta$ and $\betatrunc$ are supported on $\lambda$ squarefree. Moreover, from the Fourier expansions of $\widetilde{\Lambda}_R, \widetilde{H}_R, \Lambda_R, H_Q$ (defined respectively in \ref{lambda_tilde}, \ref{h_tilde}, \ref{lambda_def}, \ref{h_def}) we have
\begin{equation}
    \betatrunc\Big(\frac{b}{r}\Big)=\sum_{\substack{q_1 \in G_R,\\ q_2 \in G_Q}} \alpha(q_1)\alpha'(q_2) \sum_{\substack{a_i \in \zffmi{q_i},\\ \frac{a_1}{q_1}+\frac{a_2}{q_2} \equiv \frac{b}{r} \ (\modd \ \zff)}} e\Big( \frac{a_1}{q_1} \Big),
\end{equation}
where $\alpha$, $\alpha'$ are defined as in \ref{alpha}, \ref{alpha_prime}, and $\alpha(q),\alpha'(q) \leqslant \frac{\tau(q)^2}{2^{\deg( q )}}$. Similarly, for $r \in G_R$,
\begin{equation}
    \beta\Big(\frac{b}{r}\Big)=\sum_{q_1, q_2} \alpha(q_1)\alpha'(q_2) \sum_{\substack{a_i \in \zffmi{q_i},\\ \frac{a_1}{q_1}+\frac{a_2}{q_2} \equiv \frac{b}{r} \ (\modd \ \zff)}} e\Big( \frac{a_1}{q_1} \Big).
\end{equation}
Therefore, to show that $|\betatrunc(\lambda)-\beta(\lambda)| \leqslant \frac{1}{10}2^{-K}$, it suffices to show that $$D:=\bigg| \sum_{\max(\frac{q_1}{R}, \frac{q_2}{Q}) > 1} \alpha(q_1)\alpha'(q_2) \sum_{\substack{a_i \in \zffmi{q_i},\\ \frac{a_1}{q_1}+\frac{a_2}{q_2} \equiv \frac{b}{r} \ (\modd \ \zff)}} e\Big( \frac{a_1}{q_1} \Big) \bigg| \leqslant \frac{1}{10}2^{-K}.$$
The key to bounding $D$ is bounding the inner sum, which we do in the following lemma:

\begin{lemma}
\label{main_lemma_for_34}
    We have, for any $r, q_1, q_2 \in \zff$ squarefree and $b \in \zffmodinvt$
    $$S := \bigg| \sum_{\substack{a_i \in \zffmi{q_i},\\ \frac{a_1}{q_1}+\frac{a_2}{q_2} \equiv \frac{b}{r} \ (\modd \ \zff)}} e\Big( \frac{a_1}{q_1} \Big) \bigg| \ll \tau(r).$$
\end{lemma}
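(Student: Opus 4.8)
The plan is to analyze the Diophantine condition $\frac{a_1}{q_1}+\frac{a_2}{q_2}\equiv\frac{b}{r}\pmod{\zff}$ by clearing denominators, exactly as in the denominator computation at the end of the proof of Lemma \ref{original_4_2}. Writing $d=(q_1,q_2)$, $q_1=dq_1'$, $q_2=dq_2'$ with $q_1'\perp q_2'$ and $d\perp q_1'q_2'$ (using squarefreeness), the congruence $\frac{a_1}{q_1}+\frac{a_2}{q_2}\equiv\frac{b}{r}$ forces, as in that computation, first that $q_1'q_2'\mid r$ (so the sum is non-empty only when this holds), and then a single congruence condition relating $a_1$ and $a_2$ modulo $d$. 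Concretely, once $q_2'$ (equivalently $a_2\bmod q_2'$ modulo its constraints) and the residue $a_1\bmod q_1'$ are fixed, the value of $a_1$ modulo $d$ is pinned down up to the modulus $d/t$ where $t=r/(q_1'q_2')$, matching the count $2^{\deg(tq_1/d)}$ of admissible $a_1\in\zffmi{q_1}$ derived there.

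First I would fix $a_2\in\zffmi{q_2}$ and sum over the admissible $a_1$, using the explicit description of the constraint to write the inner sum over $a_1$ as a (complete) exponential sum over an arithmetic-progression-like set in $\zffmi{q_1}$. The character $e(a_1/q_1)$ is an additive character modulo $q_1$; restricting $a_1$ to a coset of the subgroup of residues $\equiv 0\pmod{d/t}$ and to units mod $q_1'$, the sum over $a_1$ factors over the prime-power (here prime, by squarefreeness) components of $q_1$. On each prime component, the sum is either a Ramanujan-type sum or a short geometric-type sum, each of size $O(1)$ in absolute value up to at most one cancellation factor; the upshot is that the $a_1$-sum has absolute value $O(1)$ whenever the support is non-empty, and is non-empty only for $a_2$ in a coset of size controlled by $d/t$. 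Summing the resulting $O(1)$ over the admissible $a_2$ then gives a bound of the form $O\!\big(\#\{\text{admissible }a_2\}\big)$, and the admissibility forces $q_1'q_2'\mid r$, so the number of admissible $(q_1',q_2')$-splittings and residues is at most $\tau(r)\cdot O(1)$, yielding $S\ll\tau(r)$.

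The main obstacle will be the bookkeeping of the factorization of the $a_1$-sum and verifying that each local factor is genuinely $O(1)$ rather than merely $O(q^{\deg})$ — in particular handling the primes dividing $d$ (where the $a_1$-residue is constrained but $a_1$ still ranges over a coset, giving a Ramanujan sum $\mu(\cdot)$-type contribution) separately from the primes dividing $q_1'$ (where $a_1$ is a unit and the character is primitive, giving again $O(1)$), and from primes dividing $q_1$ but coprime to both $d$ and $q_1'$, of which there are none by definition. A clean way to sidestep the case analysis is to note that the constraint, together with $b\in\zffmi{r}$ and squarefreeness, makes the inner sum over $a_1$ equal (up to sign) to a product of complete additive-character sums over $\zffmi{(\text{prime})}$ with at most one free prime, each bounded by $1$ in modulus; I would state this as a short sublemma and prove it by induction on the number of prime factors of $q_1$, peeling one prime at a time using Lemma \ref{indicator_mod_r}. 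With that sublemma in hand, the remaining estimate is just the divisor-type count $\#\{(q_1',q_2'): q_1'q_2'\mid r\}\le\tau(r)^{2}$ — and in fact, since $q_1',q_2'$ are complementary divisors of a divisor of $r$, this is $\le\tau(r)$ — completing the proof.
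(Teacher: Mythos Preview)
Your proposal has a genuine gap: the strategy of fixing $a_2$ and summing over the admissible $a_1$ cannot give the bound $\tau(r)$, because once $a_2\in\zffmi{q_2}$ is fixed, the congruence $\frac{a_1}{q_1}\equiv\frac{b}{r}-\frac{a_2}{q_2}\pmod{\zff}$ together with $(a_1,q_1)=1$ determines $a_1$ \emph{uniquely} (if it exists). So your ``inner sum over $a_1$'' is a single term of modulus $1$, and bounding $S$ by $\#\{\text{admissible }a_2\}$ throws away all cancellation. Take $r=1$, $b=0$, and $q_1=q_2=p$ a prime of large degree: the constraint is $a_1+a_2\equiv 0\pmod p$, every $a_2\in\zffmi{p}$ is admissible, and your estimate yields $S\le 2^{\deg p}-1$, whereas in truth $S=\big|\sum_{a\in\zffmi{p}}e(a/p)\big|=1$ and the target bound is $\tau(1)=1$. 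The final ``divisor-type count'' $\#\{(q_1',q_2'):q_1'q_2'\mid r\}$ is also not a count of anything: $q_1,q_2,r$ are fixed in the lemma, so $q_1',q_2'$ are fixed as well, and in the example above $q_1'=q_2'=1$ with no bearing on the number of admissible $a_2$.

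What actually produces the saving is to keep the sum over $a_1$ intact (eliminating $a_2$, which is determined by $a_1$) and then to \emph{detect} the constraint by an additional character sum rather than by restricting the range. In the paper's argument one sets $d=(r,q_1)$ (not $(q_1,q_2)$), rewrites the condition $\denom(\frac{b}{r}-\frac{a_1}{q_1})=q_2$ as $(a_1 r'-b q_1',d)=s$ for appropriate $s\mid d$, expands the indicator of this congruence as $\frac{1}{2^{\deg d}}\sum_{f\in\zff/(d)}e\big(f(a_1 r'-b q_1'-cs)/d\big)$, and only then takes absolute values. This leaves a full Ramanujan sum in $a_1$ over $\zffmi{q_1}$, to which Lemma~\ref{ramanujamsums} applies; combining the two Ramanujan bounds and the elementary identity $\sum_{f\in\zff/(d)}2^{\deg((f,d))}=2^{\deg d}\tau(d)$ yields $S\ll\tau(d)\le\tau(r)$. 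Your sketch never reaches a full Ramanujan sum because it freezes $a_1$ to a single residue before any character can act.
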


Before proving the lemma, we will show how it implies Proposition \ref{original_3_4}. First, we note that if $a_1, a_2$ such that $\frac{a_1}{q_1}+\frac{a_2}{q_2} \equiv \frac{b}{r} \ (\modd \ \zff)$ exist, we must have $q_1 | rq_2$ and $q_2 | rq_1$.

Suppose that the lemma holds. Then
$$D \leqslant N^{O(1)} \sum_{\substack{\max(\frac{q_1}{R}, \frac{q_2}{Q}) > 1, \\ q_1|rq_2, \\ q_2 | rq_1}} \frac{\tau(q_1)^2\tau(q_2)^2\tau(r)}{2^{\deg( q_1q_2 )}} \lessapprox \tau(r) \sum_{\substack{q_1 \geqslant Q, \\ q_2 \leqslant q_1, \\ q_1|rq_2, \\ q_2 | rq_1}}\frac{\tau(q_1)^2\tau(q_2)^2}{2^{\deg( q_1q_2 )}}.$$
Since $q_2 | rq_1$, $\tau(q_2)\leqslant \tau(r)\tau(q_1)$. Moreover, since $q_1|rq_2$, $\frac{q_1}{(r, q_1)}|q_2$, so $\deg q_2 \geqslant \deg(\frac{q_1}{(r, q_1)} )$. Hence, we have
\begin{multline*}
    D \lessapprox \tau(r)^{3} \sum_{q_1 \geqslant Q} \frac{\tau(q_1)^{4} \cdot2^{\deg ((q_1, r))}}{2^{2\deg( q_1 )}} \leqslant \tau(r)^{3} \sum_{s | r} 2^{\deg (s)} \sum_{\substack{q \geqslant Q, \\ s|q}}\frac{\tau(q)^{4}}{ 2^{2\deg( q )}} \\ \ll \tau(r)^{3} \sum_{s|r} \sum_{q \geqslant Q} \frac{\tau(q)^{4}}{2^{2\deg(q)}} = \tau(r)^{4} \sum_{q \geqslant Q} \frac{\tau(q)^{4}}{2^{2\deg(q)}}.
\end{multline*}

From the divisor bound for polynomials, Lemma \ref{divisor_bound}, for any $\eps'>0$ we have $\tau(q) \ll_{\eps'} 2^{\eps'\deg (q)}$. Hence, $D \lessapprox_{\eps'} \tau(r)^{4}\sum_{q \geqslant Q}2^{\eps'-2}$. Since
$$\sum_{q \in G_N} 2^{(\eps'-2)\deg (q)}=\sum_{n < N}2^{n-1} \cdot 2^{(\eps'-2)n}=\frac{1}{2}\sum_{n<N}2^{(\eps'-1)n}=\frac{1}{2} \cdot \frac{1-2^{(\eps'-1)N}}{1-2^{\eps'-1}}$$
and $\sum_{q \in \zff} 2^{(\eps'-2)\deg(q)} = \frac{1}{2} \cdot \frac{1}{1-2^{\eps'-1}}$, we have
$$D \lessapprox_{\eps'} \tau(r)^{4} \frac{2^{(\eps'-1)Q}}{1-2^{\eps'-1}}.$$
For $\eps'$ sufficiently small, this is $\ll_{\eps'} \tau(r)^{4} \cdot 2^{(\eps'-1)Q}$. Since $r=\denom(\lambda)$, $\deg(r) \leqslant K(1+\eps) +O_{\eps}(\log N)$ from the assumptions of the proposition and by the divisor bound, we get $D \lessapprox_{\eps'} 2^{K(1+\eps)\eps'+(\eps'-1)Q + O_{\eps}(\log N)}$. Since $K=Q-\eps$, for $\eps'$ sufficiently small we have $D \leqslant \frac{1}{5}2^{-K}$ for $N$ sufficiently large.

\begin{proof}[Proof of Lemma \ref{main_lemma_for_34}]
    Let us first notice that for each $a_1$ such that $\denom(\frac{b}{r}-\frac{a_1}{q_1})=q_2$, there exists exactly one $a_2 \in \zffmi{q_2}$ such that $\frac{a_1}{q_1}+\frac{a_2}{q_2} \equiv \frac{b}{r} \ (\modd \ \zff)$. Hence, $$S=\bigg| \sum_{\substack{a \in \zffmi{q_1}, \\ \denom(\frac{b}{r}-\frac{a_1}{q_1})=q_2}} e \Big( \frac{a_1}{q_1} \Big) \bigg|.$$
    Let us denote $(r, q_1)=d, \ r=r' \cdot d$ and $q_1 = q_1' \cdot d$. We have $\denom(\frac{b}{r}-\frac{a}{q_1})=q_2$ $\iff$ $(bq_1-ar, q_1r)=\frac{q_1r}{q_2}$ $\iff$ $(ar'-bq_1', dr'q_1')=\frac{dr'q_1'}{q_2}$. Since $r', q_1' \perp ar'-bq_1'$, for the last condition to hold we need to have $r'q_1' | q_2 | dr'q_1'$. Hence, we may write $q_2=r'q_1' \cdot t$ for $d=ts$. Then $\denom(\frac{b}{r}-\frac{a}{q_1})=q_2$ is equivalent to $(ar'-bq_1', d)=s$ and we have
    \begin{multline*}
        S
        = \bigg| \sum_{\substack{a \in \zffmi{q_1}, \\ (ar'-bq_1', d)=s}}  e \Big( \frac{a}{q_1} \Big) \bigg|
        = \bigg| \sum_{c \in \zffmi{\frac{d}{s}}} \sum_{\substack{a \in \zffmi{q_1}, \\ ar'-bq_1' \equiv cs (\modd \ d)}} e \Big( \frac{a}{q_1} \Big) \bigg| \\
        = \bigg| \sum_{c \in \zffmi{\frac{d}{s}}} \sum_{\substack{a \in \zffmi{q_1}}} e \Big( \frac{a}{q_1} \Big) \frac{1}{2^{\deg( d )}} \sum_{f \in \zff / (d)} e \Big( \frac{f(ar'-bq_1' - cs)}{d} \Big) \bigg| \\
        \leqslant \frac{1}{2^{\deg( d )}} \sum_{f \in \zff / (d)} \bigg| \sum_{a \in \zffmi{q_1}} e \Big( \frac{a(1+f r'q_1')}{q_1} \Big) \bigg| \cdot \bigg| \sum_{c \in \zffmi{\frac{d}{s}}} e \Big( \frac{-cf}{d/s} \Big) \bigg|.
    \end{multline*}
    From bounds on Ramanujan sums for polynomials (Lemma \ref{ramanujamsums}), the sum over $a$ in the last expression is at most $2^{\deg( (1+fr' q_1', q_1) )}=2^{\deg( (1+f r' q_1', d) )}$ and the sum over $c$ is at most $2^{\deg( (f, d/s) )} \leqslant 2^{\deg( (f, d) )}$. Hence, we have
    \begin{multline*}
        S
        \leqslant \frac{1}{2^{\deg( d )}} \sum_{f \in \zff / (d)} 2^{\deg( (1+f r' q_1', d)) + \deg( (f, d) )}
        = \frac{1}{2^{\deg( d )}} \sum_{f \in \zff / (d)} 2^{\deg( (f+f^2 r' q_1', d) )} \\
        \leqslant 2\frac{1}{2^{\deg( d )}} \sum_{f \in \zff / (d)} 2^{\deg( (f, d) )}
        \leqslant \frac{1}{2^{\deg( d )}} 2\sum_{l | d} 2^{\deg( l ) + \deg( \frac{d}{l} )}
        = 2\tau(d) \leqslant 2\tau(r) \ll \tau(r),
    \end{multline*}
    where the first equality holds as $(f, 1+f r' q_1')=1$ and the next inequality holds since the latter sum contains all terms the former one contains, and all summands are positive.
\end{proof}

\section{Proof of Proposition \ref{original_3_6}}
\label{section_3_6}

We need to show that the Fourier coefficients $\beta(\lambda)$ of
$$\widetilde{\Lambda}_R(f+1)\widetilde{H}_R(f) = \prod_{\substack{r \in G_R, \\ r \pprime}} \Lambda_r(f+1) \cdot \prod_{\substack{r \in G_R, \\ r \pprime}} \tau^2_r(f)$$
are real and non-negative, and that $\beta(0) \gg 1$.

For $r$ prime, let us define
\begin{equation}
    u_r(f)=\Big(1-\frac{1}{2^{\deg(r)}}\Big)\Big(1+\frac{3}{2^{\deg(r)}}\Big)\Lambda_r(f) \tau^2_r(f) = \begin{cases}
    0 & \text{if } f \equiv -1 \ (\modd \ r) \\
    4 & \text{if } f \equiv 0 \ (\modd \ r) \\
    1 & \text{otherwise} \\
\end{cases}
\end{equation}
and note that $$\widehat{u}_r(g) = \begin{cases}
    1 + \frac{2}{2^{\deg(r)}} & \text{if } g \equiv 0 \  (\modd \ r) \\
    \frac{1}{2^{\deg(r)}}(3-e(gr^{-1})) & \text{otherwise}
\end{cases}.$$
Hence, since $$\widetilde{\Lambda}_R(f)\widetilde{H}_R(f)=\prod_{\substack{r \in G_R, \\ r \pprime}}\Big(1 - \frac{1}{2^{\deg(r)}}\Big)^{-1}\Big( 1 + \frac{3}{2^{\deg(r)}} \Big)^{-1}u_r(f),$$
for $s$ squarefree with all prime factors of degree smaller than $R$ and $t \in \zffmodinvs$, we have $$\beta\Big(\frac{t}{s}\Big) = \prod_{\substack{r \in G_R, \\ r \pprime}} \Big(1 - \frac{1}{2^{\deg(r)}}\Big)^{-1}\Big( 1 + \frac{3}{2^{\deg(r)}} \Big)^{-1} \prod_{\substack{r \in G_R, \\ r \nmid s, \ r \pprime}} \Big( 1 + \frac{2}{2^{\deg(r)}} \Big) \prod_{\substack{r \in G_R, \\ r \mid s, \ r \pprime}} \frac{1}{2^{\deg(r)}} \big( 3 - e(f(r)r^{-1}) \big)$$
where $f(r)$ are such that for $s=r_1 \dots r_m$, we have $\frac{f(r_1)}{r_1}+ \dots + \frac{f(r_m)}{r_m}=\frac{t}{s}$. From this we can clearly see that the coefficients $\beta(\frac{t}{s})$ are non-negative.

Moreover, $$\beta(0)=\prod_{\substack{r \in G_R, \\ r \pprime}} \Big(1 - \frac{1}{2^{\deg(r)}}\Big)^{-1}\Big( 1 + \frac{3}{2^{\deg(r)}} \Big)^{-1} \Big( 1 + \frac{2}{2^{\deg(r)}} \Big)=\prod_{\substack{r \in G_R, \\ r \pprime}} \Big(1 + \frac{3}{2^{2\deg(r)} + 2 \cdot 2^{\deg(r)} - 3} \Big) \gg 1.$$
This finishes the proof of Proposition \ref{original_3_6}, and so all propositions from Section \ref{main_section} have now been proved. Together with considerations in that section, this finishes the proof of Theorem \ref{original_2_1} and so the proof of Theorem \ref{original_1_1}.

\appendix

\section{Some properties of function fields, analogous to the integers}
\label{section_appendix}

The results below hold for any $q$ a prime power and are classical, with proofs analogous to the case of integers.

\begin{lemma}[Dirichlet approximation for polynomials, \cite{dirichlet_polynomials}]
\label{dirichlet}
    Let $n \in \mathbf{N}$ and $\theta \in \mathbf{K}_{\infty}$ (for $\mathbf{K}_{\infty}$ as in (\ref{k_infty})). There exist $s, u \in \mathbf{F}_q[x]$ such that $\deg(s) < n$ and $\ord(\theta-\frac{u}{s}) \leqslant -n-\deg(s)$.
\end{lemma}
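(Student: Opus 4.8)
The plan is to prove this by the pigeonhole (``box'') principle inside $\mathbf{K}_\infty$, exactly as in the classical Dirichlet theorem over $\mathbf{Z}$, with the truncation of the fractional part to its first few coefficients playing the role of a fine grid on the circle $\mathbf{T}$.

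First I would observe that the polynomials $s \in \mathbf{F}_q[x]$ with $\deg(s) < n$ form an $n$-dimensional $\mathbf{F}_q$-vector space, so there are exactly $q^n$ of them (including $s = 0$). To each such $s$ I associate the tuple $\Phi(s) := (a_{-1}, a_{-2}, \dots, a_{-(n-1)}) \in \mathbf{F}_q^{\,n-1}$ of the coefficients of $x^{-1}, \dots, x^{-(n-1)}$ in the fractional part $\|s\theta\| \in \mathbf{T}$ (with the convention that for $n=1$ this tuple is empty, in which case the assertion is immediate). Since $q^n > q^{n-1} = |\mathbf{F}_q^{\,n-1}|$, there are two distinct polynomials $s_1, s_2$, both of degree $< n$, with $\Phi(s_1) = \Phi(s_2)$.

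Then I would set $s := s_1 - s_2$ and take $u$ to be the polynomial part of $s\theta$, so that $s\theta - u = \|s\theta\|$. Here $s \neq 0$ and $\deg(s) \leq \max(\deg s_1, \deg s_2) < n$. Writing $s_i\theta = u_i + \|s_i\theta\|$ with $u_i \in \mathbf{F}_q[x]$ gives $\|s\theta\| = \|s_1\theta\| - \|s_2\theta\|$ and $u = u_1 - u_2$; since $\Phi(s_1) = \Phi(s_2)$, the coefficients of $x^{-1}, \dots, x^{-(n-1)}$ in $\|s\theta\|$ all vanish, i.e.\ $\ord(\|s\theta\|) \leq -n$. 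Using $\theta - u/s = (s\theta - u)/s$ and additivity of $\ord$, we conclude $\ord(\theta - u/s) = \ord(\|s\theta\|) - \deg(s) \leq -n - \deg(s)$, as required.

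The argument is entirely elementary; the only bookkeeping to keep straight is that $\ord$ is additive under multiplication by the nonzero polynomial $s$, and that discarding the polynomial part of a difference of two Laurent series agreeing on the coefficients of $x^{-1},\dots,x^{-(n-1)}$ leaves a series with $\ord \leq -n$ — both immediate from the definitions in (\ref{k_infty}) and (\ref{t_circle}). There is no genuine obstacle here; the whole content of the statement is the counting inequality $q^n > q^{n-1}$.
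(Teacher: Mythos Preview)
Your pigeonhole argument is correct and is exactly the standard proof of Dirichlet approximation over $\mathbf{F}_q[x]$. The paper itself does not supply a proof of this lemma at all: it simply records the statement in the appendix as ``classical, with proofs analogous to the case of integers'' and cites \cite{dirichlet_polynomials}. So there is nothing to compare against --- you have filled in precisely the details the paper chose to omit.
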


\begin{lemma}
\label{divisor_bound}
    For any $\eps>0$, $f \in \mathbf{F}_q[x]$, we have $$\tau(f) \ll_{\eps} q^{ \eps \cdot \deg(f)}.$$
\end{lemma}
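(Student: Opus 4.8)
The plan is to mirror the classical proof of the divisor bound for the integers. Writing $f=\prod_{i=1}^{k} r_i^{a_i}$ with the $r_i$ distinct primes and $a_i\geq 1$, we have $\tau(f)=\prod_{i=1}^{k}(a_i+1)$ and $\deg(f)=\sum_{i=1}^{k} a_i\deg(r_i)$, so it suffices to prove $\prod_i (a_i+1)\ll_\eps \prod_i q^{\eps a_i\deg(r_i)}$. I would do this by bounding each ratio $(a_i+1)/q^{\eps a_i\deg(r_i)}$ separately, showing that the ``bad'' ratios come from a finite set of primes and hence contribute only a constant.

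The first step is to split the primes dividing $f$ according to whether $q^{\eps\deg(r_i)}\geq 2$ or $q^{\eps\deg(r_i)}<2$. If $q^{\eps\deg(r_i)}\geq 2$, then $q^{\eps a_i\deg(r_i)}\geq 2^{a_i}\geq a_i+1$, so the corresponding factor is $\leq 1$ and may be discarded. If instead $q^{\eps\deg(r_i)}<2$, i.e.\ $\deg(r_i)<\frac{\log 2}{\eps\log q}$, then there are only finitely many such primes $r$ — say $M_\eps$ of them, a number depending only on $\eps$ and the fixed $q$ — and for each of them I would use the elementary one-variable estimate obtained by maximising $t\mapsto (t+1)c^{-t}$ over $t\geq 0$ with $c=q^{\eps\deg(r_i)}$, whose maximum equals $\frac{c}{e\log c}$ in the relevant range $1<c\le e$; together with $c=q^{\eps\deg(r_i)}\geq q^\eps$ this gives $(a_i+1)/q^{\eps a_i\deg(r_i)}\leq \frac{2}{e\eps\log q}$.

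Combining the two cases, $\tau(f)=\prod_i(a_i+1)=\Big(\prod_i q^{\eps a_i\deg(r_i)}\Big)\Big(\prod_i\frac{a_i+1}{q^{\eps a_i\deg(r_i)}}\Big)\leq q^{\eps\deg(f)}\cdot\big(\tfrac{2}{e\eps\log q}\big)^{M_\eps}$, which is the desired bound with implied constant $C_\eps:=\big(\tfrac{2}{e\eps\log q}\big)^{M_\eps}$. I do not expect a genuine obstacle here: the whole argument is elementary. The only points needing a little care are choosing the threshold $q^{\eps\deg(r_i)}=2$ separating the two classes of primes, checking the one-variable maximisation, and noting that $c$ stays bounded away from $1$ (which holds since $\deg(r_i)\geq 1$, so $c\geq q^\eps$) — all of which are routine.
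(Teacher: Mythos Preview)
Your argument is correct and is precisely the classical integer divisor-bound proof transported to $\mathbf{F}_q[x]$, which is exactly what the paper indicates (the appendix states the lemma without proof, noting only that it is ``classical, with proofs analogous to the case of integers''). One small remark: your implied constant $C_\eps$ nominally depends on $q$ as well as $\eps$, but since for $q^\eps\geq 2$ the ``bad'' set of primes is empty, and since for the remaining finitely many $q$ both $M_\eps\leq 2^{1/\eps}$ and $\tfrac{2}{e\eps\log q}\leq \tfrac{2}{e\eps\log 2}$ hold, the constant is in fact uniform in $q$.
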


\begin{lemma}
\label{original_A_1}
    We have $$\sum_{\substack{f \in \mathbf{F}_q[x]: \\ \deg(f) < N}} \tau(f)^B \leqslant q^N \cdot N^{O_B(1)}.$$
\end{lemma}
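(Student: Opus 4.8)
The plan is to prove, by induction on $B$, the slightly more precise bound
\[
\sum_{\substack{f \in \mathbf{F}_q[x],\ f\ne0 \\ \deg f < N}} \tau(f)^B \ \le\ C_B\, q^N N^{2^B-1} \qquad (N \ge 1),
\]
for a constant $C_B$ depending only on $B$ (and on $q$, which is fixed); since $2^B - 1 = O_B(1)$ this is exactly Lemma \ref{original_A_1}. (The sum is understood to be over nonzero $f$, since $\tau(0)$ is undefined.) The two ingredients are elementary. The first is the exact identity $\sum_{f\ \monic,\ \deg f = n}\tau(f) = (n+1)q^n$, obtained by writing $\tau(f)$ as the number of factorizations $f = de$ into monic polynomials and summing $q^k q^{n-k}$ over $k=0,\dots,n$. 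The second is submultiplicativity, $\tau(dg) \le \tau(d)\tau(g)$, immediate from comparing the local factors $a_P+b_P+1$ and $(a_P+1)(b_P+1)$ at each prime $P$.

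For the base case $B = 1$, the identity gives $\sum_{f\ne0,\ \deg f<N}\tau(f) = (q-1)\sum_{n=0}^{N-1}(n+1)q^n \le Nq^N$, so $C_1 = 1$ works. For the inductive step, I would expand one factor of $\tau$ as a sum over monic divisors, interchange summation, and substitute $f = dg$; since $\tau(dg)^{B-1}\le \tau(d)^{B-1}\tau(g)^{B-1}$ this yields
\[
\sum_{\substack{f\ne0 \\ \deg f<N}}\tau(f)^B \ \le\ \sum_{\substack{d\ \monic \\ \deg d<N}}\tau(d)^{B-1}\sum_{\substack{g\ne0 \\ \deg g<N-\deg d}}\tau(g)^{B-1}.
\]
Applying the inductive hypothesis to the inner sum — legitimately, since $\deg d < N$ forces $N-\deg d \ge 1$ — bounds the right-hand side by $C_{B-1}\,q^N N^{2^{B-1}-1}\sum_{d\ \monic,\ \deg d<N}\tau(d)^{B-1}q^{-\deg d}$. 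Using the inductive hypothesis a second time, now to estimate $\sum_{\deg d\le n}\tau(d)^{B-1}$ for each $n<N$, this residual sum is $O_B\big(\sum_{n<N}(n+1)^{2^{B-1}-1}\big) = O_B(N^{2^{B-1}})$. Multiplying through gives the asserted bound with exponent $2(2^{B-1}-1)+1 = 2^B-1$ and $C_B = \tfrac{q}{q-1}C_{B-1}^2$.

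There is no genuine obstacle here; the one point deserving attention is the weighted sum $\sum_{\deg d<N}\tau(d)^{B-1}q^{-\deg d}$, which diverges as $N\to\infty$ — this is precisely why a polynomial factor $N^{O_B(1)}$, rather than a constant, is unavoidable — and which must be controlled by reapplying the inductive hypothesis rather than by a naive geometric-series estimate; the passage between monic and arbitrary nonzero polynomials contributes only a harmless factor $q-1$. An alternative route, giving the sharp constant, is to compute the Dirichlet series $\sum_{f\ \monic}\tau(f)^B u^{\deg f} = \prod_P\big(\sum_{k\ge0}(k+1)^B u^{k\deg P}\big)$, extract the factor $(1-qu)^{-2^B}$, and show the residual Euler product extends analytically past $|u| = q^{-1}$ via the prime number theorem for $\mathbf{F}_q[x]$; but the induction above is shorter and fully self-contained, so that is the route I would take.
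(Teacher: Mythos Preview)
Your proof is correct: the base case via the divisor identity $\sum_{\deg f=n,\ f\ \monic}\tau(f)=(n+1)q^n$ and the inductive step via submultiplicativity of $\tau$ both go through exactly as you describe, and the exponent $2^B-1$ matches the classical integer result. The paper itself does not give a proof of this lemma; it is simply listed in the appendix as a classical fact ``with proofs analogous to the case of integers'', so there is nothing in the paper to compare your argument against. Your induction is precisely the standard argument one would expect here.
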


\begin{lemma}
\label{ramanujamsums}
    For any $h, c \in \mathbf{F}_q[x]$, we have the following bound for the Ramanujan sums over polynomials: $$\bigg| \sum_{a \in \zffmi{h}}e\Big( \frac{ac}{h} \Big) \bigg| \leqslant q^{\deg( (c, h) )}.$$
\end{lemma}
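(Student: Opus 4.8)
The plan is to put the sum into closed form by Möbius inversion over the lattice of monic divisors of $h$ in $\zff$, and then extract the cancellation coming from the sign of $\mu$. Write $r_h(c):=\sum_{a\in\zffmi{h}}e(ac/h)$ for the Ramanujan sum (I take $q=2$ throughout, consistently with Lemma \ref{indicator_mod_r}; the general $q$ case is word‑for‑word the same). First I would sort the complete exponential sum $\sum_{a \bmod h}e(ac/h)$ by the value $d=(a,h)$: writing $a=da'$, the map $a\mapsto a'$ is a bijection from $\{a \bmod h:(a,h)=d\}$ onto $\{a' \bmod (h/d):(a',h/d)=1\}$ (using $(da',h)=d\,(a',h/d)$, valid in $\zff$), and $e(ac/h)=e(a'c/(h/d))$ since $ac/h=a'c/(h/d)$ in $\mathbf{K}_\infty$. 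Summing over $d\mid h$ and reindexing $e=h/d$ gives the identity $\sum_{e\mid h}r_e(c)=\sum_{a \bmod h}e(ac/h)$, whose right‑hand side equals $2^{\deg h}$ if $h\mid c$ and $0$ otherwise by Lemma \ref{indicator_mod_r} (with $c$ in the role of $f$).

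Möbius inversion on the divisor poset of $h$ — which behaves exactly as over $\mathbf{Z}$, with $\mu$ as defined in Section \ref{section_fourier} — then yields
$$r_h(c)=\sum_{e\mid h}\mu(h/e)\,2^{\deg e}\,1_{e\mid c}=\sum_{e\mid (c,h)}\mu(h/e)\,2^{\deg e}.$$
Here the triangle inequality alone overshoots the target $2^{\deg((c,h))}$, so the crux is to use the $\mu$. Set $g=(c,h)$ and substitute $e=g/e'$ with $e'\mid g$; since $g\mid h$ we have $h/e=(h/g)e'$ and $2^{\deg e}=2^{\deg g-\deg e'}$, whence
$$r_h(c)=2^{\deg g}\sum_{e'\mid g}\mu\big((h/g)e'\big)\,2^{-\deg e'}.$$
If $h/g$ is not squarefree then every term vanishes and $r_h(c)=0$. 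Otherwise $\mu((h/g)e')$ is nonzero only when $e'$ is squarefree and coprime to $h/g$, in which case it equals $\mu(h/g)\mu(e')$; letting $P$ be the set of primes dividing $g$ but not $h/g$, the sum collapses to $\mu(h/g)\prod_{p\in P}(1-2^{-\deg p})$, which has absolute value at most $1$. Hence $|r_h(c)|\le 2^{\deg g}=2^{\deg((c,h))}$.

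The only genuine obstacle is this last step — making the cancellation explicit rather than bounding crudely; everything else is the routine $\mathbf{Z}\leftrightarrow\zff$ dictionary (prime factorisation, Möbius inversion, Lemma \ref{indicator_mod_r}). As a fallback I would instead observe that $r_h(c)$ is multiplicative in $h$ by CRT, reduce to $h=p^k$ with $p$ prime, and compute directly: if $p^k\mid c$ then $r_{p^k}(c)=\phi(p^k)<2^{k\deg p}=2^{\deg((c,p^k))}$; if $p^j\|c$ with $j<k$ then $r_{p^k}(c)=2^{j\deg p}\,r_{p^{k-j}}(c_0)$ with $p\nmid c_0$, and $r_{p^m}(c_0)\in\{-1,0\}$ for $m\ge 1$ (again by Lemma \ref{indicator_mod_r}), so $|r_{p^k}(c)|\le 2^{j\deg p}=2^{\deg((c,p^k))}$. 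Multiplying over the prime‑power components of $h$ and using $(c,h)=\prod_{p^k\|h}(c,p^k)$ then closes the argument. I would present the Möbius‑inversion version as the main proof, since it is shorter and exhibits the dependence on $(c,h)$ transparently.
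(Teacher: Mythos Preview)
Your argument is correct. The paper does not actually supply a proof of this lemma: it is stated in the appendix among results that ``hold for any $q$ a prime power and are classical, with proofs analogous to the case of integers.'' Your M\"obius-inversion derivation is exactly that classical integer proof transported to $\mathbf{F}_q[x]$, so there is nothing to compare --- you have filled in precisely what the paper leaves implicit. The fallback via multiplicativity and the prime-power computation is equally standard and would also suffice.
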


\end{document}